\documentclass[11pt]{amsart}

\usepackage[T1]{fontenc}
\usepackage[utf8]{inputenc}
\usepackage{lmodern}
\usepackage{amsmath,amssymb,amsthm,mathtools}
\usepackage{enumitem}
\usepackage{booktabs}
\usepackage{array}
\usepackage{hyperref}
\usepackage[a4paper,margin=1.15in]{geometry}
\hypersetup{
  colorlinks=true,
  linkcolor=blue,
  citecolor=blue,
  urlcolor=blue
}

\newtheorem{theorem}{Theorem}[section]
\newtheorem{proposition}[theorem]{Proposition}
\newtheorem{lemma}[theorem]{Lemma}
\newtheorem{corollary}[theorem]{Corollary}
\newtheorem{problem}[theorem]{Problem}
\theoremstyle{definition}
\newtheorem{definition}[theorem]{Definition}
\newtheorem{example}[theorem]{Example}
\theoremstyle{remark}

\DeclareMathOperator{\Par}{Par}
\DeclareMathOperator{\Cl}{Cl}
\DeclareMathOperator{\supp}{supp}
\DeclareMathOperator{\adist}{adist}
\DeclareMathOperator{\trk}{trk}
\DeclareMathOperator{\sgn}{sgn}

\title[Jump and Gradient Invariants]{Jump and Gradient Invariants in the Partition Graph}
\author{Fedor B. Lyudogovskiy}

\begin{document}

\begin{abstract}
We introduce edgewise jump invariants and gradient-type structures for the partition graph \(G_n\): its vertices are the partitions of \(n\), and its edges correspond to elementary transfers of one unit between parts. Previous work on \(G_n\) has focused primarily on vertex-level invariants such as degree, local simplex dimension, and support size. In this paper we study the edgewise variation of such invariants. For an oriented edge \(e=(\lambda,\mu)\) and a vertex invariant \(F\), we define the signed jump \(\Delta_eF=F(\mu)-F(\lambda)\), and we focus on the basic jump signature
\[
J(e)=(\Delta_ed,\Delta_e\delta,\Delta_e\sigma),
\]
where \(d\) is degree, \(\delta\) is local simplex dimension, and \(\sigma\) is support size. We prove that support jumps are universally bounded by \(2\) and describe them in terms of local multiplicity data. We also develop a taxonomy of active, neutral, pure, and mixed transitions, relate nonzero jumps to threshold-layer crossings, and introduce strict gradient orientations associated with arbitrary real-valued vertex invariants. Finally, we formulate a computational atlas framework for studying jump spectra, transition ranks, localization of large jumps, and monotone corridors. No large-scale computations are carried out here; the atlas is presented as a reproducible protocol for subsequent work. This framework provides an edgewise language for comparing local vertex invariants with larger-scale structural patterns in \(G_n\).
\end{abstract}

\maketitle

\medskip
\noindent\textbf{Keywords.} integer partitions; partition graph; edgewise transition morphology; jump invariants; gradient-type structures; support jumps; degree jumps; local simplex dimension; clique complex; transition types; monotone paths; computational atlas.

\smallskip
\noindent\textbf{Mathematics Subject Classification (2020).} Primary 05A17, 05C75; Secondary 05C12, 05C90, 57Q70, 06A07.

\medskip

\section{Introduction}

The partition graph \(G_n\) has a simple definition but a rich local morphology. Its vertices are the integer partitions of \(n\), and two partitions are adjacent when one can be obtained from the other by moving one unit from one part to another part and then reordering. For standard background on partitions, see \cite{Andrews,Macdonald,Stanley}; for graph-theoretic and combinatorial-topological terminology, see \cite{Diestel,Kozlov}. This elementary transfer operation produces a finite graph whose local structure reflects several interacting features of a partition: its available transfers, support, degree, local clique structure, and position relative to global regions such as the boundary, the center, and the self-conjugate axis.

Previous work in this series has developed several vertex-level invariants and geometric structures of \(G_n\). The present paper uses only the parts of that framework most directly needed for edgewise transition language. These include the clique-complex background from \cite{Lyu2026Homotopy}, the local transfer and local morphology formalism from \cite{Lyu2026Local}, degree theory from \cite{Lyu2026DegreeTheory}, simplex shells and simplex-layer boundaries from \cite{Lyu2026SimplicialShells,Lyu2026SimplexLayers}, support terminology from \cite{Lyu2026Support}, and the broader geometric viewpoint from \cite{Lyu2026Growing,Lyu2026Directional}. Among the vertex-level invariants used below are the degree
\[
d(\lambda)=\deg_{G_n}(\lambda),
\]
the local simplex dimension
\[
\delta(\lambda)=\dim_{\mathrm{loc}}(\lambda)
\]
in the clique complex
\[
K_n=\Cl(G_n),
\]
and the support size
\[
\sigma(\lambda)=|\supp(\lambda)|.
\]
These invariants describe the local morphology of individual vertices. They measure, respectively, graph-theoretic branching, local simplicial thickness, and the number of distinct part sizes present in a partition.

Here we study a different but closely related question. Instead of asking only what value an invariant has at a vertex, we ask how that value changes along an edge. Thus, for an oriented edge
\[
e=(\lambda,\mu)
\]
and a vertex invariant \(F\), we consider the signed jump
\[
\Delta_eF=F(\mu)-F(\lambda).
\]
Passing from \(F(\lambda)\) to \(\Delta_eF\) shifts the emphasis from static vertex morphology to edgewise transition morphology. The main objects of the paper are jump invariants and gradient-type structures, which describe how local regimes change under one elementary transfer.

The role of this paper in the series is primarily structural: it establishes a common edgewise language and isolates a small set of elementary structural facts, namely the support-jump bound, the local multiplicity description of support jumps, the threshold-crossing interpretation of integer-valued jumps, and the acyclicity of strict gradient orientations. The computational atlas language introduced later is intended as a reproducible framework for subsequent calculations rather than as a complete classification of all realized jump signatures.

The basic jump signature studied here is
\[
J(e)=(\Delta_ed,\Delta_e\delta,\Delta_e\sigma).
\]
It records, for a single edge, the simultaneous change in degree, local simplex dimension, and support size. The absolute signature
\[
|J|(e)=(|\Delta_ed|,|\Delta_e\delta|,|\Delta_e\sigma|)
\]
is an invariant of the underlying unoriented edge.

This language makes it possible to classify elementary transfers according to whether they are degree-active, dimension-active, support-active, pure, mixed, or fully neutral. It also makes it possible to distinguish numerical neutrality from structural neutrality: an edge may preserve the values of \(d\), \(\delta\), and \(\sigma\), while still changing the neighbor system or local transfer structure.

A first exact result concerns support jumps. If
\[
\sigma(\lambda)=|\supp(\lambda)|,
\]
then every edge satisfies
\[
|\Delta_e\sigma|\le 2.
\]
Moreover, support jumps are determined by local multiplicity data. If an elementary transfer is written as
\[
p\mapsto p-1,\qquad q\mapsto q+1,
\]
then the support jump is determined by the transfer data \((p,q)\) together with the multiplicities of the affected sizes
\[
p,\quad q,\quad p-1,\quad q+1,
\]
where these expressions are understood as a set of affected sizes and some of them may coincide.
The support component of every jump signature therefore belongs to the fixed set
\[
\{-2,-1,0,1,2\},
\]
independently of \(n\).

Degree and local-dimension jumps are subtler. A degree jump compares the sizes of two neighbor systems:
\[
\Delta_ed=|N(\mu)|-|N(\lambda)|.
\]
Equivalently, it measures the imbalance between neighbors gained and neighbors lost when passing from \(\lambda\) to \(\mu\). Local-dimension jumps compare maximal local simplex structures through the two endpoints. These jumps are not controlled by the support argument alone and therefore require separate structural and computational analysis.

The paper also introduces a gradient-type viewpoint. Every real-valued invariant
\[
F:\Par(n)\to\mathbb R
\]
defines a strict \(F\)-gradient orientation of \(G_n\): an edge is oriented from \(\lambda\) to \(\mu\) whenever
\[
F(\mu)>F(\lambda).
\]
This orientation is acyclic. Thus each invariant gives its own notion of upward movement, plateau edges, monotone paths, and corridors. There is no single canonical gradient on \(G_n\); rather, degree, local dimension, support size, axial distance, and other invariants may define different and sometimes opposed directions of drift.

For integer-valued invariants, jumps are also equivalent to threshold-layer crossings. If
\[
L_F^{\ge r}(n)=\{\lambda:F(\lambda)\ge r\},
\]
then an edge has nonzero \(F\)-jump exactly when it crosses the boundary of at least one threshold layer. More precisely,
\[
|\Delta_eF|
\]
counts the number of integer threshold boundaries crossed by the edge. This gives a direct connection between jump morphology and layer morphology.

The final part of the paper formulates a protocol for a computational atlas of edgewise transitions. For fixed \(n\), one may compute the realized signed signatures
\[
\mathcal J(n)=\{J(e):e\in\mathcal E_n^{\mathrm{or}}\},
\]
the absolute signatures
\[
\mathcal J_{\mathrm{abs}}(n)=\{|J|(e):e\in E(G_n)\},
\]
jump histograms, transition-rank distributions, and localization data relative to \((a,b)\)-stacks, axial distance, degree layers, simplex layers, support strata, shells, and boundary regions.

A broader purpose of this framework, developed further in the series, is to prepare a bridge between two levels of the theory. On the one hand, the graph \(G_n\) has rich local and regional morphology. On the other hand, the associated clique complex \(K_n\) has comparatively simple global homotopy type in the existing theory. The edgewise framework developed here provides an intermediate language for studying how local regimes meet, how threshold and layer boundaries are crossed, and how transition patterns are organized across the graph.

The paper is organized as follows. Section~\ref{sec:edgewise-language} introduces oriented edges, signed and absolute jumps, jump signatures, and strict gradient orientations. Section~\ref{sec:support-jumps} proves the basic support-jump bound and describes support jumps in terms of local multiplicity data. Section~\ref{sec:degree-dimension-jumps} discusses degree and local-dimension jumps in terms of neighbor-system and simplex-system reorganization. Section~\ref{sec:joint-profiles} develops the taxonomy of pure, mixed, active, and neutral transitions. Section~\ref{sec:gradient-corridors} introduces gradient-type behavior, monotone paths, and corridors. Section~\ref{sec:atlas} describes the computational atlas framework for localizing jump regimes. Section~\ref{sec:conclusion} concludes with open problems and the role of edgewise morphology in the broader structure of the partition-graph program.

\section{Edgewise transition language}\label{sec:edgewise-language}

Let \(n\ge 1\). We write \(\Par(n)\) for the set of integer partitions of \(n\), and we denote by \(G_n\) the partition graph on \(\Par(n)\). Thus two partitions are adjacent if one can be obtained from the other by moving one unit from one part to another part and then reordering the resulting parts.

We now fix notation for variations of vertex invariants along edges.

\subsection{Oriented edges and jumps}

Although \(G_n\) is undirected, we shall often use an auxiliary orientation of an edge. We write
\[
e=(\lambda,\mu)
\]
for the oriented edge from \(\lambda\) to \(\mu\), where \(\lambda\sim\mu\) in \(G_n\). The set of oriented edges will be denoted by
\[
E^{\mathrm{or}}(G_n)=\{(\lambda,\mu):\{\lambda,\mu\}\in E(G_n)\}.
\]
The same unoriented edge with the opposite orientation will be denoted by
\[
\bar e=(\mu,\lambda).
\]

Let
\[
F:\Par(n)\to A
\]
be a vertex invariant with values in an abelian group \(A\). For an oriented edge \(e=(\lambda,\mu)\), define the signed jump of \(F\) along \(e\) by
\[
\Delta_eF=F(\mu)-F(\lambda).
\]
When \(A=\mathbb Z\) or \(A=\mathbb R\), we also define the absolute jump magnitude
\[
|\Delta_eF|=|F(\mu)-F(\lambda)|.
\]

The signed jump depends on the orientation of the edge, while the absolute jump does not.

\begin{lemma}[Orientation reversal]
Let \(F:\Par(n)\to A\) be a vertex invariant with values in an abelian group. If \(e=(\lambda,\mu)\) and \(\bar e=(\mu,\lambda)\), then
\[
\Delta_{\bar e}F=-\Delta_eF.
\]
In particular, if \(F\) is real-valued or integer-valued, then
\[
|\Delta_{\bar e}F|=|\Delta_eF|.
\]
\end{lemma}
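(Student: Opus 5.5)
The argument is a direct unwinding of the definition of the signed jump, so the plan is simply to compute both sides. First, I will note that \(\bar e=(\mu,\lambda)\) is again an oriented edge, since adjacency in \(G_n\) is symmetric. Hence \(\Delta_{\bar e}F\) is defined. Applying the definition with the roles of the endpoints exchanged gives
\[
\Delta_{\bar e}F=F(\lambda)-F(\mu).
\]

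Next I will use the abelian group axioms in \(A\). Negation is an automorphism, and \(-(x-y)=y-x\) holds in any abelian group. This gives
\[
F(\lambda)-F(\mu)=-\bigl(F(\mu)-F(\lambda)\bigr)=-\Delta_eF,
\]
which is the first claim.

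For the second claim, take \(A=\mathbb Z\) or \(A=\mathbb R\). I will apply the absolute value to the identity just obtained and use \(|-x|=|x|\). This yields \(|\Delta_{\bar e}F|=|\Delta_eF|\), so the absolute jump depends only on the underlying unoriented edge \(\{\lambda,\mu\}\).

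I do not expect any step to be a real obstacle. The only points needing care are that \(\bar e\) is a legitimate oriented edge, which follows from the symmetry of adjacency, and that the group is written additively.
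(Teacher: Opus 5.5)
Your proposal is correct and follows the paper's argument: unwind the definition to get \(\Delta_{\bar e}F=F(\lambda)-F(\mu)=-\bigl(F(\mu)-F(\lambda)\bigr)=-\Delta_eF\), then take absolute values. Your check that \(\bar e\) is a legitimate oriented edge, by symmetry of adjacency, is a harmless extra that the paper leaves implicit.
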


\begin{proof}
By definition,
\[
\Delta_{\bar e}F=F(\lambda)-F(\mu)=-\bigl(F(\mu)-F(\lambda)\bigr)=-\Delta_eF.
\]
Taking absolute values gives the second assertion.
\end{proof}

Thus signed jumps are naturally attached to oriented edges, whereas absolute jumps are naturally attached to unoriented edges.

\subsection{The main vertex invariants}

The main vertex invariants considered in this paper are the degree, the local simplex dimension, and the support size.

First,
\[
d(\lambda)=\deg_{G_n}(\lambda)
\]
denotes the degree of \(\lambda\) in the partition graph.

Second, let
\[
K_n=\Cl(G_n)
\]
be the clique complex of \(G_n\). The local simplex dimension of \(\lambda\) is
\[
\delta(\lambda)=\dim_{\mathrm{loc}}(\lambda),
\]
defined as the maximum dimension of a simplex of \(K_n\) containing \(\lambda\).

Third, write
\[
\lambda=(1^{m_1}2^{m_2}3^{m_3}\cdots)
\]
in multiplicity notation. The support of \(\lambda\) is
\[
\supp(\lambda)=\{i\ge 1:m_i(\lambda)>0\},
\]
and its support size is
\[
\sigma(\lambda)=|\supp(\lambda)|.
\]

For an oriented edge \(e=(\lambda,\mu)\), the corresponding basic jumps are
\[
\Delta_ed=d(\mu)-d(\lambda),
\]
\[
\Delta_e\delta=\delta(\mu)-\delta(\lambda),
\]
and
\[
\Delta_e\sigma=\sigma(\mu)-\sigma(\lambda).
\]

These three quantities measure different aspects of an elementary transition. The degree jump records the change in graph-theoretic local branching. The local-dimension jump records the change in local simplicial thickness. The support jump records the change in the number of distinct part sizes.

\subsection{Jump signatures}
\label{subsec:jump-signatures}

The signed jump signature of an oriented edge \(e=(\lambda,\mu)\) is defined by
\[
J(e)=(\Delta_ed,\Delta_e\delta,\Delta_e\sigma).
\]
The absolute jump signature of the corresponding unoriented edge is
\[
|J|(e)=(|\Delta_ed|,|\Delta_e\delta|,|\Delta_e\sigma|).
\]

The signed signature records direction. The absolute signature records magnitude.

By the orientation-reversal lemma,
\[
J(\bar e)=-J(e),
\]
whereas
\[
|J|(\bar e)=|J|(e).
\]

One may also consider expanded jump signatures containing additional geometric or positional invariants. For example, if
\[
a(\lambda)=\lambda_1
\]
is the largest part and
\[
b(\lambda)=\ell(\lambda)
\]
is the number of parts, then one may consider
\[
J_{\mathrm{ext}}(e)=(\Delta_ed,\Delta_e\delta,\Delta_e\sigma,\Delta_ea,\Delta_eb,\Delta_e|a-b|).
\]
In this paper, however, the basic signature \(J(e)\) will be the primary object. Positional components such as \(\Delta a\), \(\Delta b\), and \(\Delta |a-b|\) will be used only when discussing localization, axial drift, or computational atlases.

\subsection{Jump classes and transition zones}

For a vertex invariant \(F\), define the oriented nonzero jump edge set by
\[
E_{F,\mathrm{or}}^{\neq 0}(n)=\{e\in E^{\mathrm{or}}(G_n):\Delta_eF\neq 0\}.
\]
For \(r\ge 0\), define the unoriented large absolute jump edge set by
\[
E_{|F|}^{\ge r}(n)=\{\{\lambda,\mu\}\in E(G_n):|F(\mu)-F(\lambda)|\ge r\}.
\]

Thus signed jump classes are naturally defined on oriented edges, while absolute jump classes are naturally defined on unoriented edges.

For several invariants \(F_1,\dots,F_k\), one obtains oriented joint jump classes
\[
E^{\mathbf v}_{F_1,\dots,F_k}(n)=\{e\in E^{\mathrm{or}}(G_n):(\Delta_eF_1,\dots,\Delta_eF_k)=\mathbf v\}.
\]
In particular, for the basic signature \(J(e)\), define
\[
E_{\mathbf v}^{\mathrm{or}}(n)=\{e\in E^{\mathrm{or}}(G_n):J(e)=\mathbf v\}.
\]

These sets provide the edgewise analogue of vertex strata. Instead of grouping vertices by the value of an invariant, we group edges by the transition they realize.

This distinction is central to the present paper. A vertex invariant describes local morphology at a single partition. A jump invariant describes how that morphology changes under one elementary transfer.

\subsection{Strict gradient orientations}

This construction is an orientation of selected edges, not a vector field. We use the term gradient orientation only because the orientation points in the direction of increasing values of a chosen invariant.

Let
\[
F:\Par(n)\to\mathbb R
\]
be a real-valued vertex invariant. The strict \(F\)-gradient orientation of \(G_n\) is obtained by orienting an edge \(\{\lambda,\mu\}\) from \(\lambda\) to \(\mu\) whenever
\[
F(\mu)>F(\lambda).
\]
Edges satisfying
\[
F(\lambda)=F(\mu)
\]
are called \(F\)-plateau edges.

This is not a vector field; it is an orientation of the graph induced by the scalar invariant \(F\). We use the term gradient orientation because the edges point in the direction of increasing \(F\).

Thus every oriented edge is of one of three types with respect to \(F\):
\[
\Delta_eF>0,\qquad \Delta_eF<0,\qquad \Delta_eF=0.
\]

\begin{proposition}[Acyclicity of strict gradient orientations]
Let \(F:\Par(n)\to\mathbb R\) be a real-valued vertex invariant. The strict \(F\)-gradient orientation of \(G_n\) has no directed cycles.
\end{proposition}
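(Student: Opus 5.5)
The plan is to argue by contradiction, using that a strictly increasing real sequence cannot return to its starting value. Suppose the strict \(F\)-gradient orientation contains a directed cycle
\[
\lambda_0\to\lambda_1\to\cdots\to\lambda_{k-1}\to\lambda_k=\lambda_0,\qquad k\ge 1.
\]
By the definition of the strict orientation, each directed edge \((\lambda_{i-1},\lambda_i)\) is oriented this way only because \(F(\lambda_i)>F(\lambda_{i-1})\). Plateau edges, where \(F(\lambda)=F(\mu)\), receive no orientation at all, so they cannot occur in a directed cycle. This exclusion is exactly what makes the statement true, and it is the only point needing care.

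Chaining the strict inequalities along the cycle and using transitivity of \(<\) on \(\mathbb R\) gives
\[
F(\lambda_0)<F(\lambda_1)<\cdots<F(\lambda_k)=F(\lambda_0).
\]
This yields \(F(\lambda_0)<F(\lambda_0)\), which is a contradiction. In jump language, the same argument reads as follows. Summing the signed jumps \(\Delta_{e_i}F\) around the cycle telescopes to \(F(\lambda_k)-F(\lambda_0)=0\). On the other hand, each summand is strictly positive, so the sum is positive.

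No graph-specific input about \(G_n\) is needed, because the argument works for any graph and any real-valued function. The orientation-reversal lemma is also not required. The only subtlety I anticipate is confirming that the orientation is well defined, meaning no edge is oriented both ways. This holds because \(F(\mu)>F(\lambda)\) and \(F(\lambda)>F(\mu)\) cannot both be true. The same observation rules out directed 2-cycles on a single edge, which are in any case covered by the case \(k=2\) above.

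As a corollary worth recording, the argument shows that every directed path is \(F\)-strictly monotone. Consequently, its length is bounded by the number of distinct values of \(F\) on \(\Par(n)\), minus one.
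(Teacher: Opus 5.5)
Your proof is correct and follows essentially the same route as the paper: both arguments note that \(F\) strictly increases along every directed edge, so a directed cycle would force \(F(\lambda_0)<F(\lambda_0)\). Your additional remarks (the orientation is well defined, the telescoping-jump reformulation, and the closing corollary, which is exactly the paper's later length bound for strictly monotone paths) are accurate but not needed for the statement itself.
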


\begin{proof}
Along every directed edge, the value of \(F\) strictly increases. Therefore along any directed path, \(F\) strictly increases at each step. A directed cycle would require \(F\) to return to its initial value after a strict increase, which is impossible.
\end{proof}

This gives a precise sense in which a vertex invariant may define a gradient-type structure on \(G_n\). The important point is that such a structure is invariant-relative: different invariants may define different, and sometimes opposed, directions of increase.

The stronger language of monotone paths and corridors will be developed later in the paper.

\section{Support jumps}\label{sec:support-jumps}

The support size
\[
\sigma(\lambda)=|\supp(\lambda)|
\]
is one of the simplest vertex invariants of a partition. Its edgewise behavior already illustrates the basic mechanism studied in this paper: even a single elementary transfer may change the local type of a vertex.

In this section we prove that support jumps are universally bounded and that they are completely controlled by local multiplicity data.

\subsection{Elementary transfer notation}

Let
\[
\lambda=(1^{m_1}2^{m_2}3^{m_3}\cdots)
\]
be a partition of \(n\). An elementary transfer may be described as follows.

Choose a donor part of size \(p\ge 1\). Choose a recipient part of size \(q\ge 0\), where \(q=0\) is allowed and represents the creation of a new part. The transfer replaces
\[
p\mapsto p-1
\]
and
\[
q\mapsto q+1.
\]
A part of size \(0\) is omitted. If \(q>0\), the recipient is an existing part of size \(q\). If \(p=q\), the donor and recipient rows are required to be distinct, so that \(m_p(\lambda)\ge 2\).

Such a transfer defines an edge of \(G_n\) only when the resulting partition is distinct from the original one. Thus identity moves, such as the case \(q=p-1\) where the two affected row sizes are merely interchanged, do not define edges. The case \(p=1,q=0\) is also excluded for the same reason.

Equivalently, for a legal elementary transfer, the multiplicities change only at the sizes
\[
p,\qquad q,\qquad p-1,\qquad q+1,
\]
with the convention that size \(0\) is not part of the support, and the resulting partition \(\mu\) satisfies \(\mu\ne\lambda\). These expressions are to be understood as a set of affected sizes; some of them may coincide. Then \(\lambda\sim\mu\) in \(G_n\).

\subsection{The support-jump bound}

\begin{proposition}[Support jumps are bounded]
Let \(\lambda\sim\mu\) be an edge of \(G_n\). Then
\[
|\sigma(\mu)-\sigma(\lambda)|\le 2.
\]
Equivalently, for every oriented edge \(e=(\lambda,\mu)\),
\[
\Delta_e\sigma\in\{-2,-1,0,1,2\}.
\]
\end{proposition}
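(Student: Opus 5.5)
The plan is to use the elementary transfer description from the previous subsection. Write the edge as a transfer \(p\mapsto p-1\), \(q\mapsto q+1\) with donor size \(p\ge 1\) and recipient size \(q\ge 0\). The multiplicity vector of \(\mu\) agrees with that of \(\lambda\) at every size outside
\[
S=\{p,\;q,\;p-1,\;q+1\}\setminus\{0\}.
\]
The first step is to observe that \(\supp(\mu)\) and \(\supp(\lambda)\) agree outside \(S\). Consequently \(\Delta_e\sigma\) equals the net change in the number of elements of \(S\) lying in the support.

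A crude bound on \(|S|\le 4\) is not enough. Instead we split the affected sizes into two groups: sizes that can only \emph{leave} the support, and sizes that can only \emph{enter} it.

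\emph{Sizes that can leave.} A size can leave the support only if its multiplicity decreases. The only decreases are \(m_p\) and \(m_q\) (the latter when \(q>0\)), each by one, except when coincidences cancel them. So at most two sizes, namely \(p\) and \(q\), can leave, and \(\sigma(\mu)\ge\sigma(\lambda)-2\).

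\emph{Sizes that can enter.} A size can enter the support only if its multiplicity increases. This happens only at \(p-1\) (if \(p\ge 2\)) and \(q+1\). So at most two sizes can enter, and \(\sigma(\mu)\le\sigma(\lambda)+2\).

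Together these give \(|\Delta_e\sigma|\le 2\). Since \(\sigma\) is integer-valued, this is equivalent to \(\Delta_e\sigma\in\{-2,-1,0,1,2\}\).

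The one step that needs care is the bookkeeping when sizes in \(S\) coincide:
\begin{itemize}
\item \(p=q\), where \(m_p\) drops by \(2\);
\item \(q=p-1\), which is an identity move and is excluded;
\item \(p-1=q+1\), where that size gains \(2\);
\item \(p=q+1\), where the changes cancel.
\end{itemize}
The bound never needs the exact net change at a size, only its sign. Each coincidence merges at most two changes into one size, so the counts of possible leaving and entering sizes can only go down. I therefore expect no real obstacle.

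As a symmetric alternative, we can use orientation reversal. The reverse edge \(\bar e\) is again an elementary transfer, and \(\Delta_{\bar e}\sigma=-\Delta_e\sigma\) by the orientation-reversal lemma. So it suffices to prove the lower bound \(\Delta_e\sigma\ge -2\) for all edges, which follows from the leaving count alone.
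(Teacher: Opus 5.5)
Your proof is correct and is essentially the paper's argument: only the donor and recipient sizes \(p\) and \(q\) (when \(q>0\)) can lose their last occurrence, and only \(p-1\) (when \(p\ge 2\)) and \(q+1\) can newly appear, so \(\sigma\) can drop by at most \(2\) and rise by at most \(2\). One small redundancy: your cases ``\(q=p-1\)'' and ``\(p=q+1\)'' are the same condition, and the cancellation you describe is exactly why that move is the excluded identity move; this does not affect the bound.
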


\begin{proof}
Let \(\mu\) be obtained from \(\lambda\) by an elementary transfer
\[
p\mapsto p-1,\qquad q\mapsto q+1.
\]
Only the selected donor and recipient rows are changed. Consequently, only the original sizes \(p\) and \(q\), if \(q>0\), can lose their last occurrence, and only the resulting sizes \(p-1\), if \(p>1\), and \(q+1\) can become newly present.

The support size can therefore decrease by at most \(2\) and increase by at most \(2\). Hence
\[
-2\le \sigma(\mu)-\sigma(\lambda)\le 2.
\]
This proves the assertion.
\end{proof}

The bound is sharp. For example, in \(G_8\) the transfer
\[
(4,4)\longrightarrow (4,3,1)
\]
is obtained by moving one unit from one part of size \(4\) to a new part. Here \(p=4\) and \(q=0\), so the recipient is a newly created part. The support changes from
\[
\{4\}
\]
to
\[
\{4,3,1\},
\]
so
\[
\Delta\sigma=2.
\]
The reverse edge has support jump \(-2\).

\subsection{Support jumps as local multiplicity data}

The previous proposition gives a universal bound. We now record the more precise fact that the support jump is determined by the transfer data together with the local multiplicities of the sizes affected by the transfer.

Let
\[
\lambda=(1^{m_1}2^{m_2}3^{m_3}\cdots)
\]
and suppose that \(\mu\) is obtained from \(\lambda\) by the transfer
\[
p\mapsto p-1,\qquad q\mapsto q+1.
\]

Let \(m_i=m_i(\lambda)\). Define the multiplicity change
\[
\varepsilon_i=m_i(\mu)-m_i(\lambda).
\]
Then \(\varepsilon_i=0\) unless
\[
i\in\{p,q,p-1,q+1\},
\]
again ignoring the index \(0\).

The support jump can therefore be written as
\[
\Delta\sigma=
\sum_{i\ge 1}
\left(
\mathbf 1_{m_i+\varepsilon_i>0}-\mathbf 1_{m_i>0}
\right).
\]
Since all terms vanish outside the finite set
\[
\{p,q,p-1,q+1\}\cap \mathbb Z_{\ge 1},
\]
the support jump is determined by the transfer data \((p,q)\) together with the local multiplicity data at those sizes.

\begin{proposition}[Local multiplicity description of support jumps]
Let \(\mu\) be obtained from \(\lambda\) by an elementary transfer
\[
p\mapsto p-1,\qquad q\mapsto q+1.
\]
Then
\[
\Delta\sigma=\sigma(\mu)-\sigma(\lambda)
\]
is completely determined by the transfer data \((p,q)\) together with the multiplicities of the sizes
\[
p,\qquad q,\qquad p-1,\qquad q+1
\]
in \(\lambda\), with the convention that size \(0\) is omitted.

More explicitly, let \(r\) be the number of support sizes that disappear during the transfer, and let \(a\) be the number of support sizes that are newly introduced during the transfer. Then
\[
\Delta\sigma=a-r.
\]
\end{proposition}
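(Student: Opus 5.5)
The plan is to make the discussion preceding the statement precise in two steps. First, compute the multiplicity change vector \(\varepsilon=(\varepsilon_i)_{i\ge1}\) explicitly from \((p,q)\). Second, read off \(\Delta\sigma\) from the indicator-sum formula.

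\textbf{Step 1: \(\varepsilon\) depends only on \((p,q)\).} The transfer removes one part of size \(p\) and one part of size \(q\) (the latter only if \(q\ge1\)). It adds one part of size \(p-1\) (only if \(p\ge2\)) and one part of size \(q+1\). Hence
\[
\varepsilon_i=-[i=p]-[i=q]+[i=p-1]+[i=q+1]\qquad(i\ge1),
\]
with Iverson brackets. Coincidences such as \(p=q\), \(q+1=p-1\), or \(q=p-1\) are handled automatically by additivity of the brackets. In particular, \(\varepsilon_i=0\) outside \(S=\{p,q,p-1,q+1\}\cap\mathbb Z_{\ge1}\), and \(\varepsilon\) is a function of \((p,q)\) alone. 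Legality of the transfer guarantees \(m_i+\varepsilon_i\ge0\). For example, \(p=q\) requires \(m_p\ge2\), which gives \(m_p-2\ge0\).

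\textbf{Step 2: determination of \(\Delta\sigma\).} Since \(m_i(\mu)=m_i+\varepsilon_i\), we have \(\supp(\mu)\setminus S=\supp(\lambda)\setminus S\). Therefore
\[
\Delta\sigma=\sum_{i\in S}\bigl(\mathbf 1_{m_i+\varepsilon_i>0}-\mathbf 1_{m_i>0}\bigr).
\]
Each summand is a function of \((p,q)\) and \(m_i(\lambda)\) for \(i\in S\). This proves the first assertion.

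\textbf{Step 3: the formula \(\Delta\sigma=a-r\).} Each summand takes a value in \(\{-1,0,1\}\):
\begin{itemize}
\item it equals \(-1\) exactly when \(m_i>0\) and \(m_i+\varepsilon_i=0\), i.e.\ \(i\) disappears from the support;
\item it equals \(+1\) exactly when \(m_i=0\) and \(m_i+\varepsilon_i>0\), i.e.\ \(i\) is newly introduced;
\item it equals \(0\) otherwise.
\end{itemize}
Summing over \(S\), and noting that sizes outside \(S\) neither disappear nor appear, gives \(\Delta\sigma=a-r\). Here \(r=|\supp(\lambda)\setminus\supp(\mu)|\) and \(a=|\supp(\mu)\setminus\supp(\lambda)|\). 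As a consistency check, disappearing sizes lie in \(\{p,q\}\) and new sizes lie in \(\{p-1,q+1\}\). This recovers \(r,a\le2\) and hence the earlier support-jump bound.

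\textbf{Main obstacle.} The only delicate point is bookkeeping when elements of \(S\) coincide. A naive count of the form ``\(p\) loses one, \(q\) loses one'' can double-count, and a size such as \(p-1=q\) may be both decremented and incremented. Working with the single net quantity \(\varepsilon_i\) rather than with separate moves avoids this. The Iverson-bracket formula in Step 1 is exactly what makes the argument uniform over all coincidence cases. I would also remark that the excluded identity moves (\(q=p-1\), and \(p=1,q=0\)) give \(\varepsilon\equiv0\), which is consistent with their exclusion.
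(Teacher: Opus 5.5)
Your proof is correct and follows essentially the same route as the paper: you observe that multiplicities change only at the affected sizes \(\{p,q,p-1,q+1\}\cap\mathbb Z_{\ge1}\), and you count disappearing versus newly introduced support sizes to get \(\Delta\sigma=a-r\). Your explicit formula \(\varepsilon_i=-[i=p]-[i=q]+[i=p-1]+[i=q+1]\) makes precise a step the paper only asserts, namely why \(m_i(\mu)\) at the affected sizes is determined by \((p,q)\) and \(m_i(\lambda)\), and it handles all coincidence cases uniformly.
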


\begin{proof}
All multiplicities outside the set
\[
\{p,q,p-1,q+1\}
\]
remain unchanged and therefore do not affect the difference
\[
\sigma(\mu)-\sigma(\lambda).
\]

A support size disappears precisely when its multiplicity is positive in \(\lambda\) and becomes zero in \(\mu\). Let the number of such sizes be \(r\). A support size is newly introduced precisely when its multiplicity is zero in \(\lambda\) and positive in \(\mu\). Let the number of such sizes be \(a\).

All other affected sizes remain in the support both before and after the transfer, or remain absent both before and after the transfer. Therefore
\[
\sigma(\mu)=\sigma(\lambda)-r+a,
\]
and hence
\[
\Delta\sigma=a-r.
\]

Since disappearance and appearance can occur only among the affected sizes, the value of \(\Delta\sigma\) is determined by the transfer data \((p,q)\) together with the multiplicities of
\[
p,\qquad q,\qquad p-1,\qquad q+1
\]
in \(\lambda\). This proves the proposition.
\end{proof}

\subsection{Support-active and support-neutral edges}

The support-jump bound gives a small finite classification of edges with respect to support size.

An oriented edge \(e=(\lambda,\mu)\) is called \emph{support-increasing} if
\[
\Delta_e\sigma>0,
\]
\emph{support-decreasing} if
\[
\Delta_e\sigma<0,
\]
and \emph{support-neutral} if
\[
\Delta_e\sigma=0.
\]

The strongest possible support-increasing edges are those with
\[
\Delta_e\sigma=2,
\]
and the strongest possible support-decreasing edges are those with
\[
\Delta_e\sigma=-2.
\]

By the preceding propositions, every support-active edge is governed by local multiplicity data at the affected sizes. Thus support activity is a strictly local phenomenon in the multiplicity profile of the partition.

\subsection{Consequences for jump signatures}

Since the third component of the basic jump signature
\[
J(e)=(\Delta_ed,\Delta_e\delta,\Delta_e\sigma)
\]
is the support jump, we immediately obtain the following constraint.

\begin{corollary}[Universal support constraint]\label{cor:universal-support-constraint}
For every \(n\) and every oriented edge \(e\) of \(G_n\), if
\[
J(e)=(u,v,w),
\]
then
\[
w\in\{-2,-1,0,1,2\}.
\]
Equivalently,
\[
|w|\le 2.
\]
\end{corollary}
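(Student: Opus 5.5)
The plan is to read off the corollary from the support-jump bound proved above, since the statement only concerns the third coordinate of the signature. Fix \(n\) and an oriented edge \(e=(\lambda,\mu)\) of \(G_n\), and write \(J(e)=(u,v,w)\). By the definition of the basic jump signature in Subsection~\ref{subsec:jump-signatures}, the third component is
\[
w=\Delta_e\sigma=\sigma(\mu)-\sigma(\lambda).
\]
So it suffices to show that this difference lies in \(\{-2,-1,0,1,2\}\).

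For this I will invoke the proposition that support jumps are bounded. Its hypothesis is only that \(\{\lambda,\mu\}\) is an edge of \(G_n\), which holds because \(e\in E^{\mathrm{or}}(G_n)\). The proposition gives \(|\sigma(\mu)-\sigma(\lambda)|\le 2\), that is, \(|w|\le 2\). Since \(\sigma\) is integer-valued, \(w\) is an integer, and the integers of absolute value at most \(2\) are exactly \(-2,-1,0,1,2\).

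This also shows that the two formulations in the statement, \(w\in\{-2,-1,0,1,2\}\) and \(|w|\le 2\), are equivalent. Neither \(n\) nor the orientation of \(e\) plays any role. The bound is symmetric under reversal, and by the orientation-reversal lemma the reversed edge \(\bar e\) has third component \(-w\), which satisfies the same bound.

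I do not expect a real obstacle here. The only point to check is that the third coordinate of \(J(e)\) is exactly \(\Delta_e\sigma\) and not some modified quantity, and that the earlier proposition applies to every edge with no restriction on \(n\). The sharpness example \((4,4)\to(4,3,1)\) shows that the value set cannot be shrunk, though the corollary does not require this.
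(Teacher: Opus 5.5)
Your proposal is correct and matches the paper's proof: the corollary is just the support-jump bound applied to the third component \(w=\Delta_e\sigma\) of \(J(e)\). Your added remark that integrality of \(\sigma\) makes \(|w|\le 2\) equivalent to \(w\in\{-2,-1,0,1,2\}\) is a harmless clarification that the paper leaves implicit.
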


\begin{proof}
This is exactly the support-jump bound applied to the third component of \(J(e)\).
\end{proof}

Thus the support component of the jump signature is uniformly bounded, independently of \(n\). This is in contrast with the degree and local-dimension components, whose behavior is governed by larger local neighbor and simplex systems and will be discussed in the next section.

\section{Degree and local-dimension jumps}\label{sec:degree-dimension-jumps}

The support jump has a universal local bound because an elementary transfer affects only two original and two resulting sizes. Degree and local-dimension jumps are subtler: they are still attached to a single edge, but their magnitudes are governed by larger local structures: the neighbor system of a vertex and the local simplex structure in the clique complex.

In this section we introduce degree and local-dimension jumps as edgewise invariants and record the basic structural decompositions that will be used later in the transition taxonomy and in the computational atlas.

\subsection{Degree jumps and neighbor-system variation}

For a partition \(\lambda\in\Par(n)\), let
\[
N(\lambda)=\{\nu\in\Par(n):\nu\sim\lambda\}
\]
be the neighbor set of \(\lambda\) in \(G_n\). Then
\[
d(\lambda)=|N(\lambda)|.
\]

For an oriented edge \(e=(\lambda,\mu)\), the degree jump is
\[
\Delta_ed=d(\mu)-d(\lambda)=|N(\mu)|-|N(\lambda)|.
\]

The degree jump therefore measures the net change in the local neighbor system when one passes from \(\lambda\) to \(\mu\).

\begin{proposition}[Degree jump as neighbor-system imbalance]
Let \(e=(\lambda,\mu)\) be an oriented edge of \(G_n\). Then
\[
\Delta_ed=|N(\mu)\setminus N(\lambda)|-|N(\lambda)\setminus N(\mu)|.
\]
In particular,
\[
|\Delta_ed|\le |N(\lambda)\triangle N(\mu)|,
\]
where \(N(\lambda)\triangle N(\mu)\) denotes the symmetric difference of the two neighbor sets.
\end{proposition}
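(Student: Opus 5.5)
The plan is to prove the identity by the standard finite-set decomposition and then obtain the inequality from the triangle inequality. All sets involved are finite, since \(N(\lambda)\) and \(N(\mu)\) are subsets of the finite set \(\Par(n)\). So we may freely use cardinality arithmetic. I will write each neighbor set as a disjoint union of its part shared with the other set and its exclusive part:
\[
N(\mu)=\bigl(N(\mu)\cap N(\lambda)\bigr)\sqcup\bigl(N(\mu)\setminus N(\lambda)\bigr),\qquad
N(\lambda)=\bigl(N(\lambda)\cap N(\mu)\bigr)\sqcup\bigl(N(\lambda)\setminus N(\mu)\bigr).
\]
Taking cardinalities gives \(|N(\mu)|=|N(\lambda)\cap N(\mu)|+|N(\mu)\setminus N(\lambda)|\), and symmetrically for \(|N(\lambda)|\).

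Subtracting these, the common term \(|N(\lambda)\cap N(\mu)|\) cancels. Using \(d=|N(\cdot)|\) and the definition \(\Delta_ed=d(\mu)-d(\lambda)\), this yields the stated formula
\[
\Delta_ed=|N(\mu)\setminus N(\lambda)|-|N(\lambda)\setminus N(\mu)|.
\]
Note that \(\mu\in N(\lambda)\setminus N(\mu)\) and \(\lambda\in N(\mu)\setminus N(\lambda)\), because \(G_n\) has no loops. This shows both difference sets are nonempty, although nothing in the argument requires it.

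For the inequality, the symmetric difference is the disjoint union
\[
N(\lambda)\triangle N(\mu)=\bigl(N(\mu)\setminus N(\lambda)\bigr)\sqcup\bigl(N(\lambda)\setminus N(\mu)\bigr),
\]
so \(|N(\lambda)\triangle N(\mu)|=x+y\), where \(x=|N(\mu)\setminus N(\lambda)|\) and \(y=|N(\lambda)\setminus N(\mu)|\). Since \(x,y\ge 0\), we have \(|x-y|\le x+y\), which is exactly the claimed bound. There is no genuine obstacle here. The only point needing care is to state explicitly that the decompositions are disjoint and the sets are finite, so that the cardinality subtraction is legitimate.
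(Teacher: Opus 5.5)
Your proof is correct and matches the paper's argument: the same disjoint decompositions of \(N(\lambda)\) and \(N(\mu)\) through \(N(\lambda)\cap N(\mu)\), cancellation of the common term, and then the bound \(|x-y|\le x+y\), with the symmetric difference written as the disjoint union of the two difference sets. Your side remark that \(\mu\in N(\lambda)\setminus N(\mu)\) and \(\lambda\in N(\mu)\setminus N(\lambda)\) is also correct, though, as you say, the argument does not need it.
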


\begin{proof}
We have the disjoint decompositions
\[
N(\lambda)=\bigl(N(\lambda)\cap N(\mu)\bigr)\sqcup\bigl(N(\lambda)\setminus N(\mu)\bigr)
\]
and
\[
N(\mu)=\bigl(N(\lambda)\cap N(\mu)\bigr)\sqcup\bigl(N(\mu)\setminus N(\lambda)\bigr).
\]
Therefore
\[
|N(\mu)|-|N(\lambda)|=|N(\mu)\setminus N(\lambda)|-|N(\lambda)\setminus N(\mu)|.
\]
The inequality follows immediately from
\[
\bigl||N(\mu)\setminus N(\lambda)|-|N(\lambda)\setminus N(\mu)|\bigr|
\le |N(\mu)\setminus N(\lambda)|+|N(\lambda)\setminus N(\mu)|.
\]
The right-hand side is precisely
\[
|N(\lambda)\triangle N(\mu)|.
\]
\end{proof}

Although elementary, this identity is conceptually useful. It shows that a degree jump is not only a comparison of two numbers. It is the net result of neighbors gained and neighbors lost under an elementary transition.

\subsection{Degree-birth and degree-death sets}

For an oriented edge \(e=(\lambda,\mu)\), define the degree-birth set by
\[
B_d(e)=N(\mu)\setminus N(\lambda)
\]
and the degree-death set by
\[
D_d(e)=N(\lambda)\setminus N(\mu).
\]

Thus \(B_d(e)\) consists of neighbors present at \(\mu\) but not at \(\lambda\), while \(D_d(e)\) consists of neighbors present at \(\lambda\) but not at \(\mu\). With this notation,
\[
\Delta_ed=|B_d(e)|-|D_d(e)|.
\]

We also define the degree-reorganization size of \(e\) by
\[
\rho_d(e)=|B_d(e)|+|D_d(e)|.
\]
Equivalently,
\[
\rho_d(e)=|N(\lambda)\triangle N(\mu)|.
\]

The quantity \(\Delta_ed\) measures the net degree change, while \(\rho_d(e)\) measures the total size of the neighbor-system reorganization. These two quantities should not be confused. It may happen that
\[
\Delta_ed=0
\]
while
\[
\rho_d(e)>0.
\]
This occurs already in \(G_4\); see Example~\ref{ex:g4-neutral-active}. In that case the edge is degree-neutral numerically, but the local neighbor system still changes.

This distinction will be important later when we separate numerical jump signatures from fuller local transition data.

\subsection{Transfer-system formulation}

The neighbor set of \(\lambda\) may also be described in terms of legal elementary transfers.

Let \(\mathcal T(\lambda)\) be the set of distinct partitions different from \(\lambda\) that are obtained from \(\lambda\) by one legal elementary transfer. Then
\[
N(\lambda)=\mathcal T(\lambda),
\]
and hence
\[
d(\lambda)=|\mathcal T(\lambda)|.
\]

For an oriented edge \(e=(\lambda,\mu)\), we therefore have
\[
\Delta_ed=|\mathcal T(\mu)|-|\mathcal T(\lambda)|.
\]

This formulation emphasizes that a degree jump is produced by a change in the local transfer system. Some transfer outputs available at \(\lambda\) disappear at \(\mu\), and other transfer outputs become available only after the move.

Thus
\[
\Delta_ed=
\#\{\text{new transfer outputs at }\mu\}
-
\#\{\text{lost transfer outputs from }\lambda\},
\]
where both transfer-output sets are regarded as subsets of \(\Par(n)\). Here ``new'' means elements of \(N(\mu)\setminus N(\lambda)\), and ``lost'' means elements of \(N(\lambda)\setminus N(\mu)\).

\subsection{Local-dimension jumps}

Recall that
\[
K_n=\Cl(G_n)
\]
is the clique complex of the partition graph. For a vertex \(\lambda\), the local simplex dimension is
\[
\delta(\lambda)=\dim_{\mathrm{loc}}(\lambda),
\]
that is, the maximum dimension of a simplex of \(K_n\) containing \(\lambda\).

For an oriented edge \(e=(\lambda,\mu)\), the local-dimension jump is
\[
\Delta_e\delta=\delta(\mu)-\delta(\lambda).
\]

Unlike support jumps, local-dimension jumps are controlled not merely by the appearance or disappearance of part sizes, but by the change in maximal clique structures containing the endpoint.

Let
\[
\mathcal S(\lambda)
\]
denote the family of simplices of \(K_n\) containing \(\lambda\). Then
\[
\delta(\lambda)=\max_{\Sigma\in\mathcal S(\lambda)}\dim\Sigma.
\]
Thus
\[
\Delta_e\delta=
\max_{\Sigma\in\mathcal S(\mu)}\dim\Sigma
-
\max_{\Sigma\in\mathcal S(\lambda)}\dim\Sigma.
\]

In this sense, a local-dimension jump records whether the elementary transfer enters, exits, or remains within a region of higher local simplicial thickness.

\subsection{Simplex-layer boundaries}

For each integer \(r\ge 0\), define the local simplex layer
\[
L_{\ge r}(n)=\{\lambda\in\Par(n):\delta(\lambda)\ge r\}.
\]

The edge boundary of this layer is
\[
\partial_E L_{\ge r}(n)=\{\{\lambda,\mu\}\in E(G_n):\lambda\in L_{\ge r}(n),\ \mu\notin L_{\ge r}(n)\}.
\]
Equivalently, \(\partial_E L_{\ge r}(n)\) consists of the edges across which the predicate
\[
\delta(\lambda)\ge r
\]
changes truth value.

\begin{proposition}[Dimension jumps detect simplex-layer crossings]
Let \(e=(\lambda,\mu)\) be an oriented edge of \(G_n\).

If
\[
\lambda\notin L_{\ge r}(n)
\quad\text{and}\quad
\mu\in L_{\ge r}(n),
\]
then
\[
\Delta_e\delta>0.
\]

If
\[
\lambda\in L_{\ge r}(n)
\quad\text{and}\quad
\mu\notin L_{\ge r}(n),
\]
then
\[
\Delta_e\delta<0.
\]
Every crossing of the edge boundary of \(L_{\ge r}(n)\) is detected by a nonzero local-dimension jump.
\end{proposition}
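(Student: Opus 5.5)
The plan is to argue directly from the definition of the layer \(L_{\ge r}(n)\) as a superlevel set of the integer-valued invariant \(\delta\). No structural information about cliques in \(K_n\) should be needed. The proposition is really a general fact about threshold sets of any real-valued function, specialized to \(F=\delta\).

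For the first assertion, I would translate the hypotheses into inequalities. The condition \(\lambda\notin L_{\ge r}(n)\) means \(\delta(\lambda)<r\), and \(\mu\in L_{\ge r}(n)\) means \(\delta(\mu)\ge r\). Chaining these gives \(\delta(\lambda)<r\le\delta(\mu)\), so \(\Delta_e\delta=\delta(\mu)-\delta(\lambda)>0\). Since \(\delta\) is integer-valued, one even gets \(\Delta_e\delta\ge 1\), but only strict positivity is required.

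The second assertion follows symmetrically: \(\delta(\lambda)\ge r>\delta(\mu)\) gives \(\Delta_e\delta<0\). Alternatively, I would apply the first case to the reversed edge \(\bar e=(\mu,\lambda)\) and invoke the orientation-reversal lemma, \(\Delta_{\bar e}\delta=-\Delta_e\delta\).

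For the final sentence, I would take an unoriented edge \(\{\lambda,\mu\}\in\partial_E L_{\ge r}(n)\). By definition exactly one endpoint lies in \(L_{\ge r}(n)\). Whichever orientation is chosen, one of the two cases above applies, so \(\Delta_e\delta\neq0\), equivalently \(|\Delta_e\delta|\ge 1\).

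I do not expect a genuine obstacle. The only point needing care is the bookkeeping of orientation. The definition of \(\partial_E L_{\ge r}(n)\) lists the edge with the inside endpoint first, so I would state explicitly that the conclusion \(\Delta_e\delta\ne0\) holds for both orientations of the unoriented boundary edge.
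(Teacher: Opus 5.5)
Your proposal is correct and matches the paper's proof. Both arguments translate layer membership into the inequalities \(\delta(\lambda)<r\le\delta(\mu)\) (or the reverse), read off the sign of \(\Delta_e\delta\), and handle the exit case by symmetry; your added remark that the conclusion holds for both orientations of a boundary edge is a harmless clarification that the paper leaves implicit.
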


\begin{proof}
If \(\lambda\notin L_{\ge r}(n)\), then
\[
\delta(\lambda)<r.
\]
If \(\mu\in L_{\ge r}(n)\), then
\[
\delta(\mu)\ge r.
\]
Hence
\[
\delta(\mu)-\delta(\lambda)>0.
\]
The exit case is identical with the roles of \(\lambda\) and \(\mu\) reversed.
\end{proof}

The converse is true after allowing the threshold \(r\) to vary.

\begin{corollary}[Every nonzero dimension jump crosses a simplex-layer boundary]
Let \(e=(\lambda,\mu)\) be an oriented edge. If
\[
\Delta_e\delta\neq 0,
\]
then the underlying unoriented edge belongs to
\[
\partial_E L_{\ge r}(n)
\]
for at least one integer \(r\).

More precisely, if
\[
\delta(\lambda)<\delta(\mu),
\]
then any integer \(r\) satisfying
\[
\delta(\lambda)<r\le \delta(\mu)
\]
has this property. If
\[
\delta(\mu)<\delta(\lambda),
\]
then any integer \(r\) satisfying
\[
\delta(\mu)<r\le \delta(\lambda)
\]
has this property.
\end{corollary}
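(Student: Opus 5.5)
The plan is to reduce the statement directly to the definition of the edge boundary \(\partial_E L_{\ge r}(n)\), using the same elementary comparison as in the preceding proposition, but now choosing the threshold \(r\) rather than fixing it in advance. Since \(\delta\) is integer-valued, a nonzero jump means \(\delta(\lambda)\) and \(\delta(\mu)\) are distinct integers. Therefore the half-open integer interval between them is nonempty. The whole argument amounts to checking that every integer in that interval separates the two endpoints with respect to the predicate \(\delta(\cdot)\ge r\).

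First I treat the case \(\delta(\lambda)<\delta(\mu)\). Fix any integer \(r\) with \(\delta(\lambda)<r\le\delta(\mu)\); for instance, \(r=\delta(\mu)\) always works, which gives existence. Then \(\delta(\mu)\ge r\), so \(\mu\in L_{\ge r}(n)\), while \(\delta(\lambda)<r\), so \(\lambda\notin L_{\ge r}(n)\). Thus the predicate \(\delta(\cdot)\ge r\) changes truth value across the edge. Since \(\partial_E L_{\ge r}(n)\) is a set of unoriented edges, I read the condition in its definition as requiring one endpoint inside and the other outside, as the paper's ``equivalently'' reformulation indicates. Hence \(\{\lambda,\mu\}\in\partial_E L_{\ge r}(n)\).

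The case \(\delta(\mu)<\delta(\lambda)\) is symmetric. I apply the first case to the reversed edge \(\bar e=(\mu,\lambda)\), using the orientation reversal lemma: \(\Delta_{\bar e}\delta=-\Delta_e\delta>0\). The underlying unoriented edge is unchanged, so the same conclusion holds for every \(r\) with \(\delta(\mu)<r\le\delta(\lambda)\).

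There is no real obstacle here. The only point requiring care is the orientation convention in the definition of \(\partial_E L_{\ge r}(n)\): as literally written, it lists \(\lambda\) as the inside endpoint. I would state explicitly that membership of an unoriented edge means some labeling of its endpoints puts one inside and one outside. The other point worth stating explicitly is integrality of \(\delta\), which guarantees that a suitable \(r\) exists.
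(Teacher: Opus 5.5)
Your proof is correct and follows essentially the same argument as the paper: fix an integer \(r\) in the half-open interval between the two values of \(\delta\), observe that exactly one endpoint lies in \(L_{\ge r}(n)\), and handle the other case by interchanging the endpoints. Your two remarks are sensible clarifications that the paper leaves implicit: integrality of \(\delta\) makes \(r=\delta(\mu)\) a valid witness, and membership in \(\partial_E L_{\ge r}(n)\) should be read without regard to endpoint order.
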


\begin{proof}
Assume first that \(\delta(\lambda)<\delta(\mu)\). If
\[
\delta(\lambda)<r\le \delta(\mu),
\]
then
\[
\lambda\notin L_{\ge r}(n)
\quad\text{and}\quad
\mu\in L_{\ge r}(n).
\]
Hence the edge crosses the boundary of \(L_{\ge r}(n)\). The case \(\delta(\mu)<\delta(\lambda)\) is the same with the two endpoints interchanged.
\end{proof}

\subsection{Degree layers}

The same threshold-layer language applies to degree. For each integer \(r\ge 0\), define
\[
D_{\ge r}(n)=\{\lambda\in\Par(n):d(\lambda)\ge r\}.
\]
The edge boundary
\[
\partial_E D_{\ge r}(n)
\]
consists of edges whose endpoints lie on opposite sides of the degree threshold \(r\).

\begin{proposition}[Degree jumps and degree-layer boundaries]
Let \(e=(\lambda,\mu)\) be an oriented edge.

If the underlying edge crosses the boundary of \(D_{\ge r}(n)\), then
\[
\Delta_ed\neq 0.
\]

Conversely, if
\[
\Delta_ed\neq 0,
\]
then the underlying unoriented edge crosses the boundary of \(D_{\ge r}(n)\) for at least one integer \(r\).
\end{proposition}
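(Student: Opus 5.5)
The plan is to mirror exactly the argument used for the simplex-layer proposition and its corollary, replacing \(\delta\) by \(d\) and \(L_{\ge r}(n)\) by \(D_{\ge r}(n)\). The key observation is that crossing the boundary of \(D_{\ge r}(n)\) means that exactly one endpoint satisfies \(d\ge r\). No structural information about neighbor systems is needed; the statement is purely about threshold sets of an integer-valued function.

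For the forward direction, suppose the underlying edge \(\{\lambda,\mu\}\) lies in \(\partial_E D_{\ge r}(n)\). Then one endpoint lies in \(D_{\ge r}(n)\) and the other does not. I will split into two cases according to which endpoint lies inside.
\begin{enumerate}[label=(\roman*)]
\item If \(\lambda\notin D_{\ge r}(n)\) and \(\mu\in D_{\ge r}(n)\), then \(d(\lambda)<r\le d(\mu)\), so \(\Delta_ed>0\).
\item In the opposite case, \(d(\mu)<r\le d(\lambda)\), so \(\Delta_ed<0\).
\end{enumerate}
In both cases \(\Delta_ed\neq 0\). Since \(\partial_E D_{\ge r}(n)\) is defined on unoriented edges, I will note that the orientation of \(e\) affects only the sign, by the orientation-reversal lemma.

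For the converse, assume \(\Delta_ed\neq 0\), and first take \(d(\lambda)<d(\mu)\). I will choose the explicit threshold \(r=d(\mu)\), or indeed any integer with \(d(\lambda)<r\le d(\mu)\). Then \(\mu\in D_{\ge r}(n)\) while \(\lambda\notin D_{\ge r}(n)\), so \(\{\lambda,\mu\}\in\partial_E D_{\ge r}(n)\). Such an \(r\) exists because degrees are integers, and \(r\ge 0\) holds automatically since \(r>d(\lambda)\ge 0\). The case \(d(\mu)<d(\lambda)\) follows by interchanging the endpoints.

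There is no genuine obstacle here. The only point needing care is integrality: it guarantees that the interval \((d(\lambda),d(\mu)]\) contains an admissible threshold \(r\ge 0\). As a remark, this also yields the refinement that \(|\Delta_ed|\) equals the number of integers \(r\) for which the edge crosses \(\partial_E D_{\ge r}(n)\), matching the threshold-crossing interpretation stated in the introduction.
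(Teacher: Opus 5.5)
Your proof is correct and follows the paper's argument: opposite sides of the threshold force unequal degrees, and conversely, assuming without loss of generality \(d(\lambda)<d(\mu)\), any integer \(r\) with \(d(\lambda)<r\le d(\mu)\) gives a crossing. Your extra points are also correct: the check that \(r\ge 0\), and the remark that \(|\Delta_e d|\) counts the crossed thresholds, which anticipates the paper's later proposition on jump magnitudes.
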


\begin{proof}
This is the same threshold argument as in the local-dimension case, applied to the integer-valued invariant \(d\).

If the edge crosses the boundary of \(D_{\ge r}(n)\), then one endpoint has degree at least \(r\) and the other has degree less than \(r\). Therefore their degrees are unequal, so \(\Delta_ed\neq0\).

Conversely, if \(d(\lambda)\neq d(\mu)\), assume without loss of generality that
\[
d(\lambda)<d(\mu).
\]
Then for every integer \(r\) satisfying
\[
d(\lambda)<r\le d(\mu),
\]
the edge crosses the boundary of \(D_{\ge r}(n)\).
\end{proof}

\subsection{Why degree and local dimension are different from support}

The preceding discussion reveals an important contrast.

For support size, we have the universal edgewise bound
\[
|\Delta_e\sigma|\le 2.
\]
This follows directly from the fact that one elementary transfer changes at most two old row sizes and at most two new row sizes.

For degree and local simplex dimension, the same argument does not apply. A single elementary transfer changes only a small part of the partition, but it may reorganize several possible transfers or alter the maximal clique structures through the endpoint. Therefore degree and local-dimension jumps must be studied through the local transfer system, the local simplex structure, and computation.

In summary:
\[
\begin{array}{c|c}
\text{Invariant} & \text{Edgewise behavior} \\
\hline
\sigma & \text{universally bounded support jump} \\
d & \text{neighbor-system reorganization} \\
\delta & \text{maximal local simplex reorganization}
\end{array}
\]

This distinction is one of the guiding points of the paper. Support jumps provide a small exact model of edgewise transition. Degree and local-dimension jumps provide transition data that must be organized by signatures, layer boundaries, and computational atlases.

\section{Joint jump profiles and transition types}\label{sec:joint-profiles}

The previous sections introduced the three basic jump components
\[
\Delta_ed,\qquad \Delta_e\delta,\qquad \Delta_e\sigma.
\]
Taken separately, these jumps measure different forms of edgewise variation. The degree jump measures variation in local graph branching, the local-dimension jump measures variation in local simplicial thickness, and the support jump measures variation in the number of distinct part sizes.

In this section we study these quantities jointly. The main object is the jump signature
\[
J(e)=(\Delta_ed,\Delta_e\delta,\Delta_e\sigma).
\]
We do not attempt here to classify all realized signatures for all \(n\); that is a computational and asymptotic problem. Instead, we introduce a stable language for describing edgewise transition types.

\subsection{Signed and absolute jump profiles}

We now use the jump signature \(J(e)\) and absolute jump signature \(|J|(e)\) introduced in Section~\ref{subsec:jump-signatures}. The signed profile records direction, while the absolute profile records magnitude. If the orientation of an edge \(e\) is reversed, then
\[
J(\bar e)=-J(e),
\]
whereas
\[
|J|(\bar e)=|J|(e).
\]

\begin{proposition}[Signed symmetry of jump profiles]
Let \(e=(\lambda,\mu)\) be an oriented edge and let \(\bar e=(\mu,\lambda)\). Then
\[
J(\bar e)=-J(e).
\]
In particular,
\[
|J|(\bar e)=|J|(e).
\]
\end{proposition}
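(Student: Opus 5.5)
The plan is to apply the orientation-reversal lemma of Section~\ref{sec:edgewise-language} to each component of the signature separately. The three invariants \(d\), \(\delta\), and \(\sigma\) are integer-valued vertex invariants \(\Par(n)\to\mathbb Z\), and \(\mathbb Z\) is an abelian group, so the lemma applies to each of them with \(A=\mathbb Z\). This gives
\[
\Delta_{\bar e}d=-\Delta_ed,\qquad \Delta_{\bar e}\delta=-\Delta_e\delta,\qquad \Delta_{\bar e}\sigma=-\Delta_e\sigma.
\]

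Next I will assemble these three identities into the triple. Negation in \(\mathbb Z^3\) is componentwise, so
\[
J(\bar e)=(\Delta_{\bar e}d,\Delta_{\bar e}\delta,\Delta_{\bar e}\sigma)=(-\Delta_ed,-\Delta_e\delta,-\Delta_e\sigma)=-J(e).
\]
For the second assertion, I will take componentwise absolute values. The second part of the orientation-reversal lemma, or simply \(|-x|=|x|\) for \(x\in\mathbb Z\), gives \(|J|(\bar e)=|J|(e)\).

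There is no real obstacle here. The only point needing care is that \(\bar e\) is again an oriented edge of \(G_n\). This holds because adjacency in \(G_n\) is symmetric: the inverse of an elementary transfer is again an elementary transfer. Hence \(J(\bar e)\) is defined, and the argument is a direct componentwise application of the earlier lemma.
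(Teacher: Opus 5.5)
Your proposal is correct and matches the paper's argument: both apply the orientation-reversal lemma to each of \(d\), \(\delta\), \(\sigma\) separately, assemble the three sign changes, and then take absolute values. Your remark that \(\bar e\) is again an oriented edge, because adjacency in \(G_n\) is symmetric, is a small extra point of care that the paper leaves implicit.
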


\begin{proof}
Each component of \(J(e)\) is the signed jump of a vertex invariant. Each signed jump changes sign under reversal of orientation. Taking absolute values removes this dependence on orientation.
\end{proof}

\subsection{Active and neutral components}

For an oriented edge \(e\), define its active component set by
\[
A(e)=\{F\in\{d,\delta,\sigma\}:\Delta_eF\neq0\}.
\]
Equivalently, \(A(e)\) records which entries of the vector
\[
J(e)=(\Delta_ed,\Delta_e\delta,\Delta_e\sigma)
\]
are nonzero.

We say that \(e\) is:
\[
\begin{array}{ll}
\textit{degree-active} & \text{if } \Delta_ed\neq0,\\[2mm]
\textit{dimension-active} & \text{if } \Delta_e\delta\neq0,\\[2mm]
\textit{support-active} & \text{if } \Delta_e\sigma\neq0.
\end{array}
\]
Similarly, \(e\) is:
\[
\begin{array}{ll}
\textit{degree-neutral} & \text{if } \Delta_ed=0,\\[2mm]
\textit{dimension-neutral} & \text{if } \Delta_e\delta=0,\\[2mm]
\textit{support-neutral} & \text{if } \Delta_e\sigma=0.
\end{array}
\]

Finally, \(e\) is called \emph{fully neutral} if
\[
J(e)=(0,0,0).
\]

Since reversing the orientation of \(e\) changes the signs of all jump components but not their vanishing, the active component set is independent of orientation:
\[
A(\bar e)=A(e).
\]

\subsection{Pure and mixed transitions}

The active component set gives a first taxonomy of edgewise transitions.

We say that an edge \(e\) is a \emph{pure transition} if exactly one of the three basic jump components is active:
\[
|A(e)|=1.
\]
Thus there are three pure transition types:
\[
\text{pure degree},\qquad \text{pure dimension},\qquad \text{pure support}.
\]

We say that \(e\) is a \emph{mixed transition} if at least two components are active:
\[
|A(e)|\ge2.
\]
A mixed transition may be degree-dimension mixed, degree-support mixed, dimension-support mixed, or fully mixed.

The edge is \emph{fully mixed} if
\[
\Delta_ed\neq0,\qquad \Delta_e\delta\neq0,\qquad \Delta_e\sigma\neq0.
\]

This terminology separates two questions that should not be confused:
\begin{enumerate}
\item how large the jumps are;
\item how many morphological layers change at once.
\end{enumerate}
An edge may have small jump magnitude but still be mixed. Conversely, an edge may have a large degree jump while being pure degree-active.

\subsection{Transition rank}

\begin{definition}[Transition rank]
The transition rank of an oriented edge \(e\) is
\[
\trk(e)=|A(e)|.
\]
Thus
\[
\trk(e)\in\{0,1,2,3\}.
\]
\end{definition}

The four possibilities are:
\[
\begin{array}{c|l}
\trk(e) & \text{meaning}\\
\hline
0 & \text{fully neutral}\\
1 & \text{pure transition}\\
2 & \text{two-component mixed transition}\\
3 & \text{fully mixed transition}
\end{array}
\]

Since \(A(e)\) depends only on whether jump components vanish, the transition rank is independent of reversing the orientation of \(e\). Therefore \(\trk\) also defines an invariant of the underlying unoriented edge; we write \(\trk(\{\lambda,\mu\})\) for the common value of the two orientations.

\subsection{Sign patterns}

When an orientation has been chosen, one may refine the active component set by signs.

For a signed jump profile
\[
J(e)=(u,v,w),
\]
define the sign pattern
\[
\sgn J(e)=(\sgn u,\sgn v,\sgn w),
\]
where each entry belongs to
\[
\{-1,0,+1\}.
\]

The sign pattern records whether the edge moves upward, downward, or along a plateau for each of the three invariants.

For example,
\[
(+,+,+)
\]
means that degree, local dimension, and support size all increase along the chosen orientation. The pattern
\[
(+,-,0)
\]
means that degree increases, local dimension decreases, and support size remains constant.

Such sign patterns are especially useful when comparing gradient orientations induced by different invariants. An edge may be upward for degree and downward for local dimension. Thus gradient behavior is not intrinsic to the edge alone; it is relative to the invariant or direction being used.

\begin{definition}[Coherent and opposed mixed transitions]
Let \(e\) be a mixed transition. We say that \(e\) is \emph{sign-coherent} if all nonzero components of \(J(e)\) have the same sign.

We say that \(e\) is \emph{sign-opposed} if among the nonzero components of \(J(e)\) there are both positive and negative entries.
\end{definition}

For example,
\[
(+,+,0)
\]
is sign-coherent, while
\[
(+,-,0)
\]
is sign-opposed.

This distinction uses a chosen orientation. However, the property of being sign-coherent or sign-opposed is preserved under reversing the edge orientation, since all nonzero signs change simultaneously.

\subsection{Layer-crossing interpretation}

Each integer-valued invariant determines a family of threshold layers. Let
\[
F:\Par(n)\to\mathbb Z
\]
be an integer-valued vertex invariant. Define
\[
L_F^{\ge r}(n)=\{\lambda\in\Par(n):F(\lambda)\ge r\}.
\]

We say that an unoriented edge \(\{\lambda,\mu\}\) crosses the boundary of \(L_F^{\ge r}(n)\) if exactly one of its endpoints belongs to \(L_F^{\ge r}(n)\). Equivalently, the two endpoints lie on opposite sides of the threshold \(r\).

\begin{proposition}[Nonzero jumps are threshold crossings]
Let \(F:\Par(n)\to\mathbb Z\) be an integer-valued vertex invariant, and let \(e=(\lambda,\mu)\) be an oriented edge.

If
\[
\Delta_eF\neq0,
\]
then the underlying unoriented edge crosses the boundary of \(L_F^{\ge r}(n)\) for at least one integer \(r\).

More precisely, if
\[
F(\lambda)<F(\mu),
\]
then the edge crosses every threshold \(r\) satisfying
\[
F(\lambda)<r\le F(\mu).
\]
If
\[
F(\mu)<F(\lambda),
\]
then the edge crosses every threshold \(r\) satisfying
\[
F(\mu)<r\le F(\lambda).
\]

Conversely, if an edge crosses the boundary of \(L_F^{\ge r}(n)\) for some \(r\), then
\[
\Delta_eF\neq0.
\]
\end{proposition}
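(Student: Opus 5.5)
The argument is the same threshold argument used above for the local-dimension corollary and for the degree-layer proposition, now run for an arbitrary integer-valued invariant \(F\). It uses only the definition of \(L_F^{\ge r}(n)\) and the definition of crossing: exactly one endpoint lies in the layer. I would treat the forward direction first, in its refined form, and then the converse.

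For the forward direction, suppose \(\Delta_eF\neq0\). By the orientation-reversal lemma it suffices to treat \(F(\lambda)<F(\mu)\), since the other case follows by applying it to \(\bar e\), which has the same underlying unoriented edge. Fix an integer \(r\) with \(F(\lambda)<r\le F(\mu)\).
\begin{itemize}
\item Since \(F(\mu)\ge r\), we have \(\mu\in L_F^{\ge r}(n)\).
\item Since \(F(\lambda)<r\), we have \(\lambda\notin L_F^{\ge r}(n)\).
\end{itemize}
So exactly one endpoint lies in the layer, and the edge crosses its boundary. The interval \((F(\lambda),F(\mu)]\) contains the integer \(F(\mu)\), because \(F\) is integer-valued. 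This gives the existence statement, with \(r=F(\mu)\) as an explicit witness. It also shows that the number of crossed thresholds is exactly \(|\Delta_eF|\), matching the counting remark in the introduction.

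For the converse, suppose the edge crosses the boundary of \(L_F^{\ge r}(n)\) for some \(r\). Without loss of generality \(F(\lambda)\ge r>F(\mu)\). Then \(F(\lambda)\ne F(\mu)\), so \(\Delta_eF\neq0\).

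I expect no real obstacle. The only point needing care is the role of integrality: it guarantees that the half-open interval \((F(\lambda),F(\mu)]\) contains an integer threshold. For real-valued \(F\) one would instead allow real thresholds.
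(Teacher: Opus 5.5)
Your proof is correct and follows the paper's argument essentially verbatim. For an integer \(r\) with \(F(\lambda)<r\le F(\mu)\) you get \(\lambda\notin L_F^{\ge r}(n)\) and \(\mu\in L_F^{\ge r}(n)\), you swap endpoints for the other case, and for the converse you note that one endpoint has value \(\ge r\) and the other \(<r\). The explicit witness \(r=F(\mu)\) and your aside on the number of crossed thresholds are harmless additions. The paper proves the count as a separate proposition, and "exactly \(|\Delta_eF|\)" also needs that no \(r\) outside \((F(\lambda),F(\mu)]\) is crossed, which your converse argument supplies.
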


\begin{proof}
Suppose first that \(F(\lambda)<F(\mu)\). Then for every integer \(r\) with
\[
F(\lambda)<r\le F(\mu),
\]
we have
\[
\lambda\notin L_F^{\ge r}(n),\qquad \mu\in L_F^{\ge r}(n).
\]
Hence the edge crosses the boundary of that threshold layer.

The case \(F(\mu)<F(\lambda)\) is identical after interchanging the two endpoints.

Conversely, if an edge crosses the boundary of \(L_F^{\ge r}(n)\), then one endpoint has \(F\)-value at least \(r\) and the other has \(F\)-value less than \(r\). Hence the two \(F\)-values are not equal, so \(\Delta_eF\neq0\).
\end{proof}

Applying this proposition to the three invariants
\[
d,\qquad \delta,\qquad \sigma
\]
shows that the active component set \(A(e)\) has a layer-crossing interpretation.

An edge is degree-active exactly when it crosses at least one degree threshold layer. It is dimension-active exactly when it crosses at least one local simplex threshold layer. It is support-active exactly when it crosses at least one support-size threshold layer.

Thus the transition rank
\[
\trk(e)
\]
counts how many of the three basic threshold-layer systems are crossed by the edge.

\subsection{Jump magnitude and crossed thresholds}

The previous proposition can be sharpened slightly. The absolute jump magnitude counts the number of integer thresholds crossed.

\begin{proposition}[Jump magnitude counts crossed thresholds]
Let \(F:\Par(n)\to\mathbb Z\) be an integer-valued invariant. For an oriented edge \(e=(\lambda,\mu)\), the number of integer thresholds \(r\) such that the underlying unoriented edge crosses the boundary of
\[
L_F^{\ge r}(n)
\]
is exactly
\[
|\Delta_eF|.
\]
\end{proposition}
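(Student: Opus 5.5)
The plan is to identify the set of crossed thresholds explicitly as an integer interval and then count it. Write \(a=F(\lambda)\) and \(b=F(\mu)\), both integers. By definition, the unoriented edge crosses the boundary of \(L_F^{\ge r}(n)\) exactly when one of the two predicates \(a\ge r\), \(b\ge r\) holds and the other fails. The first step is to show that this set of thresholds is
\[
R(e)=\{r\in\mathbb Z:\min(a,b)<r\le\max(a,b)\}.
\]

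I would split into cases. If \(a=b\), then the two predicates always agree, so no threshold is crossed. This is consistent with the converse part of the preceding proposition (Nonzero jumps are threshold crossings), and it gives \(R(e)=\varnothing\) with \(|\Delta_eF|=0\).

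If \(a<b\), the forward direction of that proposition already shows that every \(r\) with \(a<r\le b\) is crossed. It remains to check that no other \(r\) is crossed:
\begin{itemize}
\item if \(r\le a\), then both \(a\ge r\) and \(b\ge r\) hold, so both endpoints lie in the layer;
\item if \(r>b\), then both predicates fail, so neither endpoint lies in the layer.
\end{itemize}
Hence \(R(e)=\{a+1,\dots,b\}\). The case \(b<a\) follows by interchanging the roles of \(\lambda\) and \(\mu\), or equivalently by the orientation-reversal lemma, since crossing is a property of the unoriented edge.

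Finally, I count: the integer interval \(\{\min+1,\dots,\max\}\) has exactly \(\max(a,b)-\min(a,b)=|b-a|=|\Delta_eF|\) elements. There is no real obstacle in this argument. The only point needing care is the exclusion step, that is, verifying that thresholds outside the half-open interval are not crossed, so that we get an exact count rather than just a lower bound. It is also important that \(F\) is integer-valued, so that the interval contains exactly \(|b-a|\) integers.
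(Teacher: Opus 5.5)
Your proposal is correct and follows the paper's proof: reduce to \(F(\lambda)<F(\mu)\), identify the crossed thresholds as the integers \(r\) with \(F(\lambda)<r\le F(\mu)\), and count them. Your explicit exclusion of \(r\le F(\lambda)\) and \(r>F(\mu)\) supplies details the paper leaves implicit in its word ``precisely,'' and your separate treatment of \(F(\lambda)=F(\mu)\) covers a case its ``without loss of generality'' silently skips.
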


\begin{proof}
Assume without loss of generality that
\[
F(\lambda)<F(\mu).
\]
The crossed thresholds are precisely the integers \(r\) satisfying
\[
F(\lambda)<r\le F(\mu).
\]
There are exactly
\[
F(\mu)-F(\lambda)=|\Delta_eF|
\]
such integers. The opposite case is identical.
\end{proof}

This observation will be useful in the computational atlas. A large jump is not merely a large numerical difference; it is an edge that crosses many integer threshold layers at once.

\subsection{Numerical neutrality versus structural neutrality}

The jump signature
\[
J(e)
\]
is a numerical invariant of an edge. It records the change in selected numerical invariants. It does not record the full local structure around the endpoints.

Therefore one must distinguish
\[
J(e)=(0,0,0)
\]
from genuine structural sameness.

A fully neutral edge may still connect vertices with different neighbor sets, different local transfer systems, or different positions in the graph.

\begin{example}[A fully neutral but structurally active edge in \(G_4\)]\label{ex:g4-neutral-active}
Consider the graph \(G_4\). The partitions of \(4\) are
\[
(4),\quad (3,1),\quad (2,2),\quad (2,1,1),\quad (1,1,1,1).
\]
The vertices
\[
(3,1)\quad\text{and}\quad (2,1,1)
\]
are adjacent.

Both have degree \(3\):
\[
d(3,1)=3,\qquad d(2,1,1)=3.
\]
Both have support size \(2\):
\[
\sigma(3,1)=2,\qquad \sigma(2,1,1)=2.
\]
Both lie in the triangle
\[
(3,1),\quad (2,2),\quad (2,1,1),
\]
so their local simplex dimension is \(2\):
\[
\delta(3,1)=2,\qquad \delta(2,1,1)=2.
\]
Hence the edge
\[
(3,1)\sim(2,1,1)
\]
has
\[
J(e)=(0,0,0).
\]

However, the neighbor sets are not the same:
\[
N(3,1)=\{(4),(2,2),(2,1,1)\},
\]
whereas
\[
N(2,1,1)=\{(3,1),(2,2),(1,1,1,1)\}.
\]
Thus the edge is fully neutral with respect to the basic numerical jump signature, but it is not structurally neutral at the level of neighbor systems.
\end{example}

This example shows why jump signatures should be treated as numerical summaries of edgewise morphology, not as complete invariants of local transition structure.

\subsection{Admissible, forbidden, and realized signatures}

For fixed \(n\), the set of realized signed jump signatures is
\[
\mathcal J(n)=\{J(e): e\text{ is an oriented edge of }G_n\}.
\]
The corresponding absolute signature set is
\[
\mathcal J_{\mathrm{abs}}(n)=\{|J|(e): e\text{ is an edge of }G_n\}.
\]

By the support-jump bound,
\[
\Delta_e\sigma\in\{-2,-1,0,1,2\}.
\]
Therefore no realized signature can have third component outside this range.

By Corollary~\ref{cor:universal-support-constraint}, every realized signature \(J(e)=(u,v,w)\) satisfies
\[
w\in\{-2,-1,0,1,2\},
\]
or equivalently \(|w|\le2\).

For the first two components, namely degree and local dimension, no analogous universal bound follows from the support argument alone. Their realized ranges must be studied using degree theory, local simplex structure, and computation.

\subsection{The basic transition taxonomy}

The preceding definitions give the following basic taxonomy:
\[
\begin{array}{c|c}
\text{type} & \text{condition}\\
\hline
\text{fully neutral} & (0,0,0)\\
\text{pure degree} & (\neq0,0,0)\\
\text{pure dimension} & (0,\neq0,0)\\
\text{pure support} & (0,0,\neq0)\\
\text{degree-dimension mixed} & (\neq0,\neq0,0)\\
\text{degree-support mixed} & (\neq0,0,\neq0)\\
\text{dimension-support mixed} & (0,\neq0,\neq0)\\
\text{fully mixed} & (\neq0,\neq0,\neq0)
\end{array}
\]

This taxonomy forgets the magnitudes and signs of the nonzero components. It records only which morphological layers are active.

A finer taxonomy is obtained by recording:
\begin{enumerate}
\item signs of nonzero components;
\item absolute magnitudes;
\item threshold layers crossed;
\item position of the edge in the global geometry of \(G_n\).
\end{enumerate}
Thus one has a hierarchy:
\[
\text{active set}\Rightarrow \text{sign pattern}\Rightarrow \text{absolute magnitude}\Rightarrow \text{geometric localization}.
\]

The computational part of the paper will use this hierarchy to organize edge atlases.

\subsection{Structural constraints and computational patterns}

At this stage, only some aspects of the transition taxonomy are established as elementary structural facts.

The following facts are used formally:
\begin{enumerate}
\item signed profiles change sign under orientation reversal;
\item absolute profiles are unoriented edge invariants;
\item the support component is universally bounded by \(2\);
\item nonzero jumps are exactly threshold-layer crossings for the corresponding invariant;
\item jump magnitude counts crossed integer thresholds;
\item transition rank counts the number of basic threshold-layer systems crossed by the edge.
\end{enumerate}

By contrast, the following questions are computational or conjectural at this stage:
\begin{enumerate}
\item which jump signatures are actually realized for large \(n\);
\item how the distribution of transition ranks evolves with \(n\);
\item whether fully mixed transitions concentrate in central or rear-central regions;
\item whether large degree jumps correlate with support jumps;
\item whether large local-dimension jumps concentrate near simplex-layer boundaries;
\item whether the absolute jump spectrum stabilizes in any normalized sense as \(n\to\infty\).
\end{enumerate}

This distinction is important. The edgewise language is exact; the observed morphology of realized signatures is empirical until proved.

\section{Gradient-type behavior and monotone corridors}\label{sec:gradient-corridors}

Jump invariants are edgewise. Gradient-type behavior is pathwise. A single edge records one elementary transition, while a path records a sequence of transitions and hence may exhibit monotonicity, drift, or oscillation.

There is no canonical gradient on \(G_n\) without specifying a vertex invariant. Instead, every real-valued invariant
\[
F:\Par(n)\to\mathbb R
\]
defines its own gradient-type structure. This section makes that statement precise and introduces monotone paths and corridors.

\subsection{Strict gradient orientation}

Let
\[
F:\Par(n)\to\mathbb R
\]
be a real-valued vertex invariant. The strict \(F\)-gradient orientation of \(G_n\) is obtained by orienting an edge
\[
\{\lambda,\mu\}
\]
from \(\lambda\) to \(\mu\) whenever
\[
F(\mu)>F(\lambda).
\]

If
\[
F(\lambda)=F(\mu),
\]
the edge is called an \(F\)-plateau edge.

Thus every oriented edge is of one of three types with respect to \(F\):
\[
\Delta_eF>0,\qquad \Delta_eF<0,\qquad \Delta_eF=0.
\]
Only the first type is oriented in the strict \(F\)-gradient direction.

\begin{proposition}[Strict gradient orientations are acyclic]
Let \(F:\Par(n)\to\mathbb R\). The strict \(F\)-gradient orientation of \(G_n\) has no directed cycles.
\end{proposition}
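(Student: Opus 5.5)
This is the same statement as the acyclicity proposition of Section~\ref{sec:edgewise-language}, so I could simply invoke it. I will instead give the short self-contained argument, since it is the monotonicity principle behind the monotone paths and corridors developed next.

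I argue by contradiction. Suppose the strict \(F\)-gradient orientation contains a directed cycle
\[
\lambda_0\to\lambda_1\to\cdots\to\lambda_k=\lambda_0,\qquad k\ge 1.
\]
Each arrow \(\lambda_{i-1}\to\lambda_i\) lies in the orientation exactly when \(F(\lambda_i)>F(\lambda_{i-1})\). Plateau edges carry no arrow, so they cannot occur in the cycle.

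Write \(e_i=(\lambda_{i-1},\lambda_i)\). Summing the signed jumps around the cycle, the sum telescopes:
\[
\sum_{i=1}^{k}\Delta_{e_i}F=F(\lambda_k)-F(\lambda_0)=0.
\]
On the other hand, every summand is strictly positive and \(k\ge1\), so the sum is strictly positive. This is a contradiction.

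Equivalently, without sums, transitivity of \(<\) on \(\mathbb R\) gives
\[
F(\lambda_0)<F(\lambda_1)<\cdots<F(\lambda_k)=F(\lambda_0),
\]
which is impossible.

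There is no real obstacle here. The only care needed is to note that the orientation is only partial: plateau edges are excluded, and only strict increases are oriented. With that observed, the argument needs nothing about \(G_n\) beyond finiteness of paths and uses only the order on \(\mathbb R\). In particular, it also shows that every directed path is simple and has length at most \(|F(\Par(n))|-1\), since \(F\) takes distinct values along it.
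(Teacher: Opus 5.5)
Your proof is correct and is essentially the paper's own argument: $F$ strictly increases along every arrow of the orientation, so a directed cycle would force $F(\lambda_0)<F(\lambda_0)$. The telescoping-sum version is a harmless restatement, and your closing remark on the length bound $|F(\Par(n))|-1$ is exactly the paper's separate length-bound proposition for strict monotone paths.
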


\begin{proof}
Along every directed edge, the value of \(F\) strictly increases. Therefore along any directed path, \(F\) strictly increases at each step. A directed cycle would require \(F\) to return to its initial value after a strict increase, which is impossible.
\end{proof}

\subsection{Monotone paths}

Let
\[
P=(\lambda_0,\lambda_1,\dots,\lambda_m)
\]
be a path in \(G_n\).

We say that \(P\) is \emph{strictly \(F\)-increasing} if
\[
F(\lambda_0)<F(\lambda_1)<\cdots<F(\lambda_m).
\]

We say that \(P\) is \emph{weakly \(F\)-increasing} if
\[
F(\lambda_0)\le F(\lambda_1)\le\cdots\le F(\lambda_m).
\]

Similarly, one defines strictly and weakly \(F\)-decreasing paths.

A strictly \(F\)-increasing path is precisely a directed path in the strict \(F\)-gradient orientation. A weakly \(F\)-increasing path may also include \(F\)-plateau edges.

\begin{proposition}[Length bound for strict monotone paths]
Let \(F:\Par(n)\to\mathbb R\), and let
\[
V_F(n)=F(\Par(n))
\]
be the set of values assumed by \(F\) on \(\Par(n)\). Every strictly \(F\)-increasing path in \(G_n\) has length at most
\[
|V_F(n)|-1.
\]
\end{proposition}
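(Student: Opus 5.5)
The plan is a pigeonhole argument on the values of \(F\) along the path. Let
\[
P=(\lambda_0,\lambda_1,\dots,\lambda_m)
\]
be a strictly \(F\)-increasing path in \(G_n\), so that its length is \(m\). By definition,
\[
F(\lambda_0)<F(\lambda_1)<\cdots<F(\lambda_m).
\]
Strict inequalities make these \(m+1\) real numbers pairwise distinct. Indeed, if \(i<j\) then chaining the inequalities gives \(F(\lambda_i)<F(\lambda_j)\).

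Each value \(F(\lambda_i)\) lies in \(V_F(n)=F(\Par(n))\). So the map \(i\mapsto F(\lambda_i)\) from \(\{0,1,\dots,m\}\) to \(V_F(n)\) is injective, and therefore
\[
m+1\le |V_F(n)|.
\]
This is exactly the claimed bound \(m\le |V_F(n)|-1\). The set \(V_F(n)\) is finite because \(\Par(n)\) is finite, so the bound is meaningful.

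As a side remark, the same injectivity shows that the vertices of \(P\) are distinct. Hence a strictly increasing walk is automatically a path, which is consistent with the acyclicity of the strict \(F\)-gradient orientation established above. There is no real obstacle here. The only point needing care is the convention that length means the number of edges \(m\), not the number of vertices \(m+1\); that convention accounts for the \(-1\) in the bound.
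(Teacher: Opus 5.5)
Your proof is correct and is essentially the paper's argument. In both, strict increase makes the \(m+1\) values \(F(\lambda_0),\dots,F(\lambda_m)\) pairwise distinct elements of \(V_F(n)\), which gives \(m+1\le |V_F(n)|\), i.e.\ length at most \(|V_F(n)|-1\).
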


\begin{proof}
Along a strictly \(F\)-increasing path, the values of \(F\) at successive vertices are pairwise distinct. Therefore the path cannot contain more vertices than the number of distinct \(F\)-values. Hence its length is at most \(|V_F(n)|-1\).
\end{proof}

\subsection{Multi-invariant monotonicity}

For several invariants
\[
F_1,\dots,F_k:\Par(n)\to\mathbb R,
\]
one can impose simultaneous monotonicity conditions.

A path
\[
P=(\lambda_0,\dots,\lambda_m)
\]
is \emph{weakly \((F_1,\dots,F_k)\)-increasing} if, for every \(i\) and every \(j\),
\[
F_i(\lambda_j)\le F_i(\lambda_{j+1}).
\]

It is \emph{strictly \((F_1,\dots,F_k)\)-increasing} if each step is weakly increasing in all components and at least one component strictly increases at each step.

Equivalently, weak monotonicity means that the vector-valued invariant
\[
\mathbf F=(F_1,\dots,F_k)
\]
moves monotonically with respect to the product partial order on \(\mathbb R^k\). The strict version additionally requires a nonzero increase in at least one component at each step.

For the basic invariants of this paper, the main case is
\[
\mathbf F=(d,\delta,\sigma).
\]
A path is weakly \((d,\delta,\sigma)\)-increasing if degree, local dimension, and support size never decrease along the path.

\subsection{Compatibility and opposition of gradients}

The jump signature
\[
J(e)=(\Delta_ed,\Delta_e\delta,\Delta_e\sigma)
\]
shows immediately whether an edge is compatible with a chosen multi-invariant monotonicity condition.

For example, an oriented edge \(e\) is compatible with weak \((d,\delta,\sigma)\)-increase if
\[
\Delta_ed\ge0,\qquad \Delta_e\delta\ge0,\qquad \Delta_e\sigma\ge0.
\]
It is compatible with strict \((d,\delta,\sigma)\)-increase if, in addition,
\[
J(e)\neq(0,0,0).
\]

By contrast, an edge with sign pattern
\[
(+,-,0)
\]
is degree-increasing but dimension-decreasing. Such an edge is compatible with a degree-gradient path, but incompatible with simultaneous degree-dimension increase.

This illustrates a basic principle: gradient-type behavior is invariant-relative. An edge may be upward for one invariant and downward for another.

\subsection{Cone-compatible transitions}

Let
\[
\mathbf F=(F_1,\dots,F_k)
\]
be a vector-valued invariant, and let
\[
C\subseteq\mathbb R^k
\]
be a prescribed set of allowed jump vectors, typically a cone or sign region.

\begin{definition}[Gradient-compatible edge]
An oriented edge \(e\) is called \emph{\((\mathbf F,C)\)-compatible} if
\[
\Delta_e\mathbf F=(\Delta_eF_1,\dots,\Delta_eF_k)\in C.
\]
\end{definition}

For example, weak simultaneous increase corresponds to the cone
\[
C=\mathbb R_{\ge0}^k.
\]
Strict simultaneous increase corresponds to
\[
C=\mathbb R_{\ge0}^k\setminus\{0\}.
\]

For
\[
\mathbf F=(d,\delta,\sigma),
\]
one can prescribe sign cones such as
\[
C_{+++}=\mathbb R_{\ge0}^3,
\]
\[
C_{++*}=\{(x,y,z):x\ge0,\ y\ge0\},
\]
or
\[
C_{+-*}=\{(x,y,z):x\ge0,\ y\le0\}.
\]

The first cone describes simultaneous weak increase of degree, local dimension, and support size. The second describes simultaneous weak increase of degree and local dimension, with no condition imposed on support. The third describes degree increase together with local-dimension decrease.

This notation permits a uniform treatment of monotone, mixed, and opposed gradient behavior.

\subsection{Corridors}

We use the term corridor for a controlled pathwise pattern of compatible edge transitions, not for an additional structure imposed on \(G_n\).

The formal definition used here is path-level.

\begin{definition}[Path corridor]
Let
\[
\mathbf F=(F_1,\dots,F_k)
\]
be a vector-valued invariant and let \(C\subseteq\mathbb R^k\) be a prescribed set of allowed jump vectors. A path
\[
P=(\lambda_0,\dots,\lambda_m)
\]
is called a \emph{\((\mathbf F,C)\)-corridor} if every oriented edge
\[
(\lambda_j,\lambda_{j+1})
\]
is \((\mathbf F,C)\)-compatible.
\end{definition}

Thus a degree-increasing corridor is a path along which
\[
d(\lambda_0)\le d(\lambda_1)\le\cdots\le d(\lambda_m).
\]
A local-dimension-increasing corridor is a path along which
\[
\delta(\lambda_0)\le \delta(\lambda_1)\le\cdots\le \delta(\lambda_m).
\]
A support-increasing corridor is a path along which
\[
\sigma(\lambda_0)\le \sigma(\lambda_1)\le\cdots\le \sigma(\lambda_m).
\]

One may also speak about subgraph-level corridors, but this requires a stronger additional convention than mere weak monotonicity. Indeed, for a scalar invariant \(F\), any subgraph can have its edges oriented from lower \(F\)-value to higher \(F\)-value, with plateau edges oriented arbitrarily; therefore a naive weak subgraph corridor would be essentially vacuous.

For this reason, the present paper uses only path corridors as a formal notion. Subgraph-level corridor language is not used formally here; it may be introduced in later computational work after additional conventions have been specified.

\subsection{Plateau edges and plateau regions}

For an invariant \(F\), an edge \(e\) is an \(F\)-plateau edge if
\[
\Delta_eF=0.
\]

The graph whose edge set consists of all \(F\)-plateau edges records the regions in which \(F\) is locally constant along elementary transfers.

Plateau behavior is important because weak monotone paths may move through plateau regions before entering a strictly increasing region.

For the three basic invariants, we get degree plateaus, local-dimension plateaus, and support-size plateaus. Their intersections are also meaningful. A fully neutral edge
\[
J(e)=(0,0,0)
\]
is simultaneously a plateau edge for all three invariants. However, as seen in the previous section, such an edge need not be structurally inactive.

\subsection{Gradient orientation and layer boundaries}

The \(F\)-gradient viewpoint is closely related to threshold layers.

For an integer-valued invariant \(F\), recall that
\[
L_F^{\ge r}(n)=\{\lambda\in\Par(n):F(\lambda)\ge r\}.
\]

A strict \(F\)-gradient edge crosses at least one threshold-layer boundary in the upward direction. Conversely, if an oriented edge crosses a threshold-layer boundary from outside to inside, then its \(F\)-jump is positive.

\begin{proposition}[Strict gradient edges are upward threshold crossings]
Let \(F:\Par(n)\to\mathbb Z\), and let \(e=(\lambda,\mu)\) be an oriented edge. Then
\[
\Delta_eF>0
\]
if and only if there exists an integer \(r\) such that
\[
\lambda\notin L_F^{\ge r}(n),\qquad \mu\in L_F^{\ge r}(n).
\]
More precisely, this holds for every integer \(r\) satisfying
\[
F(\lambda)<r\le F(\mu).
\]
\end{proposition}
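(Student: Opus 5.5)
The plan is to prove the two directions separately and read the ``more precisely'' clause off the forward direction. The argument mirrors the one for the proposition that nonzero jumps are threshold crossings, keeping track of orientation this time. The only inputs are the definition \(L_F^{\ge r}(n)=\{\lambda:F(\lambda)\ge r\}\) and the integrality of \(F\).

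For the forward direction, assume \(\Delta_eF>0\), so \(F(\lambda)<F(\mu)\). Since both values are integers, the interval \(F(\lambda)<r\le F(\mu)\) contains at least one integer, for example \(r=F(\mu)\). For any such \(r\) we have \(F(\lambda)<r\), so \(\lambda\notin L_F^{\ge r}(n)\). We also have \(F(\mu)\ge r\), so \(\mu\in L_F^{\ge r}(n)\). This gives the required integer \(r\) and proves the ``more precisely'' assertion at the same time.

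For the converse, suppose some integer \(r\) satisfies \(\lambda\notin L_F^{\ge r}(n)\) and \(\mu\in L_F^{\ge r}(n)\). Then \(F(\lambda)<r\le F(\mu)\), so \(\Delta_eF=F(\mu)-F(\lambda)>0\). This step does not use integrality.

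There is no genuine obstacle here. The one point needing care is noting that integrality of \(F\) is what makes the interval \((F(\lambda),F(\mu)]\) contain an integer, namely \(F(\mu)\) itself. For real-valued \(F\) one could still take \(r=F(\mu)\), but the statement restricts to integer thresholds, so this should be stated explicitly. One could also add, as a remark, that by the earlier proposition that jump magnitude counts crossed thresholds, exactly \(\Delta_eF\) such integers \(r\) exist.
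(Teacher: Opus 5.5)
Your proof is correct and follows essentially the same route as the paper: for the forward direction, every integer \(r\) with \(F(\lambda)<r\le F(\mu)\) separates the two endpoints, and for the converse, the membership conditions give \(F(\lambda)<r\le F(\mu)\) directly. Your explicit witness \(r=F(\mu)\) makes visible the use of integrality that the paper leaves implicit.
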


\begin{proof}
If \(\Delta_eF>0\), then \(F(\lambda)<F(\mu)\), and every integer \(r\) satisfying
\[
F(\lambda)<r\le F(\mu)
\]
has the stated property.

Conversely, if there exists \(r\) such that
\[
\lambda\notin L_F^{\ge r}(n),\qquad \mu\in L_F^{\ge r}(n),
\]
then
\[
F(\lambda)<r\le F(\mu),
\]
and hence \(F(\lambda)<F(\mu)\). Therefore \(\Delta_eF>0\).
\end{proof}

\subsection{Mixed-gradient corridors}

The most interesting corridors in \(G_n\) may not be monotone for all invariants at once. Instead, one may study paths of prescribed mixed behavior.

For example, the condition
\[
\Delta d\ge0,\qquad \Delta\delta\ge0,\qquad \Delta\sigma\ \text{unrestricted}
\]
describes paths along which graph branching and local simplicial thickness do not decrease, while support size is allowed to fluctuate.

Another possible condition is
\[
\Delta d\ge0,\qquad \Delta\delta\le0.
\]
Such a path moves upward in degree while moving downward in local dimension.

Such behavior is possible because degree and local simplicial thickness measure different aspects of local morphology.

The cone notation introduced above allows these cases to be treated uniformly. For
\[
\mathbf F=(d,\delta,\sigma),
\]
one can prescribe a cone or sign region
\[
C\subseteq\mathbb R^3
\]
and study paths whose jump signatures remain in \(C\).

\subsection{Axial and positional drift}

Some gradient language depends not on \(d,\delta,\sigma\), but on positional invariants.

Let
\[
a(\lambda)=\lambda_1
\]
and
\[
b(\lambda)=\ell(\lambda).
\]
The quantity
\[
\alpha(\lambda)=a(\lambda)-b(\lambda)
\]
measures signed displacement from the self-conjugate axis in the \((a,b)\)-projection, while
\[
\adist(\lambda)=|\alpha(\lambda)|
\]
measures axial distance in this coarse sense.

An oriented edge \(e=(\lambda,\mu)\) is called:
\[
\begin{array}{ll}
\textit{axis-inward} & \text{if } \adist(\mu)<\adist(\lambda),\\[2mm]
\textit{axis-outward} & \text{if } \adist(\mu)>\adist(\lambda),\\[2mm]
\textit{axis-neutral} & \text{if } \adist(\mu)=\adist(\lambda).
\end{array}
\]

This gives a positional gradient language distinct from degree, local dimension, and support.

An axis-inward edge need not increase degree or local dimension. Conversely, a degree-increasing or dimension-increasing edge need not move inward toward the self-conjugate axis. Thus axial drift and invariant-gradient drift should be compared, not identified.

\subsection{Formal and descriptive corridor language}

The following notions are formal definitions used in the paper:
\begin{enumerate}
\item strict \(F\)-gradient orientation;
\item \(F\)-plateau edge;
\item strictly and weakly \(F\)-monotone paths and path corridors;
\item vector-valued monotonicity with respect to a prescribed set \(C\) of allowed jump vectors;
\item upward threshold crossings;
\item axis-inward, axis-outward, and axis-neutral edges defined by \(|a-b|\).
\end{enumerate}

The following notions require computation or additional proof before they can be used as structural claims:
\begin{enumerate}
\item existence of long monotone corridors between specified regions;
\item uniqueness or canonicality of such corridors;
\item concentration of degree-increasing paths near the spine;
\item systematic inward drift of high-dimension edges;
\item relation between rear-central thickening and mixed-gradient paths;
\item asymptotic stabilization of corridor types as \(n\) grows.
\end{enumerate}

Thus the formal corridor language used here is path-level: a corridor is a path satisfying explicitly stated jump inequalities. Broader subgraph-level or statistical corridor notions are left for later computational work, where the additional conventions can be specified explicitly.

\section{Localization and computational atlas}\label{sec:atlas}

The preceding sections developed the formal language of edgewise jumps, jump signatures, transition ranks, gradient-compatible paths, and threshold-layer crossings. We now describe how this language can be used to build computational atlases of \(G_n\). No large-scale computations are performed in this paper; the section is a blueprint for future implementations. The constructions below are not meant to replace proof-level statements. Rather, they specify what data should be recorded in order to compare edgewise transition behavior across values of \(n\).

The purpose of such an atlas is not merely to count edges. Rather, it is to locate different transition types inside the global geometry of the partition graph. In particular, we want to know where large jumps occur, how pure and mixed transitions are distributed, and how edgewise behavior interacts with vertex-level structures such as degree layers, simplex layers, support strata, axial regions, the spine, shells, and boundary zones.

This section gives a reproducible framework for such computations. In this paper we include only a small illustrative computation. The same protocol can be applied to larger values of \(n\), with extended tables treated as supplementary material.

\subsection{The edge dataset for fixed \texorpdfstring{\(n\)}{n}}

For fixed \(n\), define the oriented edge dataset
\[
\mathcal E^{\mathrm{or}}_n=\{(\lambda,\mu):\lambda,\mu\in\Par(n),\ \lambda\sim\mu\}.
\]
Each unoriented edge gives two oriented edges.

For every oriented edge
\[
e=(\lambda,\mu),
\]
we record the endpoint values
\[
d(\lambda),\quad d(\mu),
\]
\[
\delta(\lambda),\quad \delta(\mu),
\]
\[
\sigma(\lambda),\quad \sigma(\mu),
\]
and hence the jump signature
\[
J(e)=(\Delta_ed,\Delta_e\delta,\Delta_e\sigma).
\]

For localization, we also record positional data for both endpoints:
\[
a(\lambda)=\lambda_1,\qquad b(\lambda)=\ell(\lambda),
\]
\[
a(\mu)=\mu_1,\qquad b(\mu)=\ell(\mu),
\]
as well as the coarse axial distances
\[
|a(\lambda)-b(\lambda)|,\qquad |a(\mu)-b(\mu)|.
\]

Depending on the focus of the computation, one may also record simplex-layer membership, degree-layer membership, support pattern, shell-depth, boundary status, or membership in other regions defined in the partition-graph program.

\subsection{Signed and absolute jump spectra}

For fixed \(n\), the signed jump spectrum of the basic signature is
\[
\mathcal J(n)=\{J(e):e\in\mathcal E^{\mathrm{or}}_n\}.
\]
The absolute jump spectrum is
\[
\mathcal J_{\mathrm{abs}}(n)=\{|J|(e):e\in E(G_n)\},
\]
where \(|J|(e)\) denotes the common absolute signature of the two orientations of the unoriented edge \(e\).

Here the signed spectrum is naturally computed on oriented edges, while the absolute spectrum is naturally computed on unoriented edges.

For each component one may also compute one-dimensional spectra:
\[
\mathcal J_d(n)=\{\Delta_ed:e\in\mathcal E^{\mathrm{or}}_n\},
\]
\[
\mathcal J_\delta(n)=\{\Delta_e\delta:e\in\mathcal E^{\mathrm{or}}_n\},
\]
\[
\mathcal J_\sigma(n)=\{\Delta_e\sigma:e\in\mathcal E^{\mathrm{or}}_n\}.
\]

By the support-jump bound,
\[
\mathcal J_\sigma(n)\subseteq\{-2,-1,0,1,2\}.
\]
The support-jump spectrum is therefore is universally bounded. The degree and local-dimension spectra must be measured and compared across values of \(n\).

\subsection{Basic jump-range tables}

The first level of the computational atlas records the ranges of the three jump components.

For each \(n\), one should compute:
\[
\min \Delta d,\qquad \max \Delta d,\qquad \max|\Delta d|,
\]
\[
\min \Delta\delta,\qquad \max \Delta\delta,\qquad \max|\Delta\delta|,
\]
\[
\min \Delta\sigma,\qquad \max \Delta\sigma,\qquad \max|\Delta\sigma|.
\]

For larger computations, a basic table has the following form:
\[
\begin{array}{c|c|c|c|c|c}
n & |\Par(n)| & |E(G_n)| & \max|\Delta d| & \max|\Delta\delta| & \max|\Delta\sigma| \\
\hline
 & & & & &
\end{array}
\]

The final column must never exceed \(2\). Therefore it also functions as a computational consistency check.

\subsection{Histograms}

For an integer-valued invariant
\[
F:\Par(n)\to\mathbb Z,
\]
define the signed jump histogram
\[
H_F^{(n)}(k)=\#\{e\in\mathcal E^{\mathrm{or}}_n:\Delta_eF=k\}.
\]

For absolute jumps, define
\[
H_{|F|}^{(n)}(k)=\#\{\{\lambda,\mu\}\in E(G_n):|F(\mu)-F(\lambda)|=k\}.
\]

The signed histogram is symmetric.

\begin{proposition}[Symmetry of signed jump histograms]
Let \(F:\Par(n)\to\mathbb Z\) be any integer-valued invariant. Then
\[
H_F^{(n)}(k)=H_F^{(n)}(-k)
\]
for all \(k\).
\end{proposition}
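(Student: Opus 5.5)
The plan is to build an explicit bijection between the oriented edges counted by \(H_F^{(n)}(k)\) and those counted by \(H_F^{(n)}(-k)\), namely orientation reversal \(e\mapsto\bar e\). Fix an integer \(k\). Set
\[
S_k=\{e\in\mathcal E^{\mathrm{or}}_n:\Delta_eF=k\},
\]
so that \(H_F^{(n)}(k)=|S_k|\) and \(H_F^{(n)}(-k)=|S_{-k}|\). It then suffices to show \(|S_k|=|S_{-k}|\).

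First I will check that reversal is a well-defined involution of \(\mathcal E^{\mathrm{or}}_n\). If \((\lambda,\mu)\in\mathcal E^{\mathrm{or}}_n\), then \(\lambda\sim\mu\). The partition graph is undirected, so \(\mu\sim\lambda\) and \((\mu,\lambda)\in\mathcal E^{\mathrm{or}}_n\). Moreover \(\bar{\bar e}=e\), so reversal is a bijection of \(\mathcal E^{\mathrm{or}}_n\) onto itself.

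Next I will invoke the orientation-reversal lemma from Section~\ref{sec:edgewise-language}, which gives \(\Delta_{\bar e}F=-\Delta_eF\). Hence \(e\in S_k\) implies \(\bar e\in S_{-k}\), and applying the same reasoning to \(-k\) gives the reverse inclusion. So reversal restricts to mutually inverse maps \(S_k\to S_{-k}\) and \(S_{-k}\to S_k\). This yields \(|S_k|=|S_{-k}|\).

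There is no genuine obstacle here. The only point needing care is the degenerate case \(k=0\), where the statement is trivially true; reversal then maps \(S_0\) to itself, and no separate argument is required. One should also note that the histogram is defined over oriented edges, not unoriented ones. This is exactly what makes the reversal bijection available; for the absolute histogram \(H_{|F|}^{(n)}\), symmetry would be vacuous.
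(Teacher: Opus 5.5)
Your proof is correct and follows the paper's argument: orientation reversal \((\lambda,\mu)\mapsto(\mu,\lambda)\) is an involution on the oriented edges, and it sends the edges with \(\Delta F=k\) bijectively onto those with \(\Delta F=-k\). Your extra checks (that reversal is well defined because \(G_n\) is undirected, and that the case \(k=0\) needs no separate treatment) are fine; they are details the paper leaves implicit.
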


\begin{proof}
The map
\[
(\lambda,\mu)\mapsto(\mu,\lambda)
\]
is an involution on the set of oriented edges. It sends edges with
\[
\Delta F=k
\]
to edges with
\[
\Delta F=-k.
\]
Hence the two classes have the same cardinality.
\end{proof}

Signed histograms are useful for checks and for directed-gradient analyses. Absolute histograms are better suited to unoriented edge morphology.

\subsection{Transition-rank statistics}

For every oriented edge \(e\), recall the transition rank
\[
\trk(e)=\#\{F\in\{d,\delta,\sigma\}:\Delta_eF\neq0\}.
\]
Since transition rank is independent of orientation, we also regard it as an invariant of the underlying unoriented edge. It is therefore naturally counted on unoriented edges.

Define
\[
T_r(n)=\#\{e\in E(G_n):\trk(e)=r\}.
\]
Here
\[
r\in\{0,1,2,3\}.
\]
The normalized frequencies are
\[
\tau_r(n)=\frac{T_r(n)}{|E(G_n)|}.
\]

These numbers measure how often elementary transfers are neutral, pure, mixed, or fully mixed with respect to the basic jump signature.

A transition-rank table has the form:
\[
\begin{array}{c|c|c|c|c|c}
n & T_0(n) & T_1(n) & T_2(n) & T_3(n) & \text{dominant rank}\\
\hline
 & & & & &
\end{array}
\]

This table answers a basic edgewise morphological question: are most elementary transitions invisible to the basic signature, pure, or mixed?

As a minimal check, consider again the graph \(G_4\). Its five unoriented edges have transition-rank distribution
\[
T_0(4)=1,\qquad T_1(4)=0,\qquad T_2(4)=2,\qquad T_3(4)=2.
\]
Indeed, the edge \((3,1)\sim(2,1,1)\) is fully neutral, the two edges adjacent to the central triangle but not to the endpoints \((4)\) and \((1,1,1,1)\) are degree-support mixed, and the two boundary edges \((4)\sim(3,1)\) and \((2,1,1)\sim(1,1,1,1)\) are fully mixed. The same example illustrates that neutral transition rank does not imply equality of neighbor systems.

\subsection{Active-set distributions}

Using the taxonomy of Section~\ref{sec:joint-profiles}, decompose the edge set into the eight basic active-set classes:
\[
E_{000},
\]
\[
E_d,\qquad E_\delta,\qquad E_\sigma,
\]
\[
E_{d\delta},\qquad E_{d\sigma},\qquad E_{\delta\sigma},
\]
\[
E_{d\delta\sigma}.
\]

For example,
\[
E_{d\sigma}
=
\{\{\lambda,\mu\}\in E(G_n):
 d(\lambda)\ne d(\mu),
\delta(\lambda)=\delta(\mu),
\sigma(\lambda)\ne\sigma(\mu)\}.
\]

This decomposition forgets magnitudes and signs, but records which morphological layers are active.

An active-set table has the form:
\[
\begin{array}{c|c|c|c|c|c|c|c|c}
n
& E_{000}
& E_d
& E_\delta
& E_\sigma
& E_{d\delta}
& E_{d\sigma}
& E_{\delta\sigma}
& E_{d\delta\sigma}
\\
\hline
 & & & & & & & &
\end{array}
\]

This table is more informative than transition rank alone, because it distinguishes, for example, degree-support mixing from degree-dimension mixing.

\subsection{Joint signature distributions}

The full signed joint distribution is
\[
C_n(u,v,w)=\#\{e\in\mathcal E^{\mathrm{or}}_n:J(e)=(u,v,w)\}.
\]

For absolute signatures, define
\[
C_n^{\mathrm{abs}}(u,v,w)=\#\{e\in E(G_n):|J|(e)=(u,v,w)\}.
\]

Because the support component satisfies
\[
w\in\{-2,-1,0,1,2\},
\]
the third coordinate is uniformly bounded. This makes slices by support jump especially useful.

For example, one may study separately:
\[
\Delta\sigma=0,
\]
\[
|\Delta\sigma|=1,
\]
and
\[
|\Delta\sigma|=2.
\]

The slice
\[
\Delta\sigma=0
\]
captures transitions that preserve the number of support sizes, while
\[
|\Delta\sigma|=2
\]
captures the strongest possible support transitions.

\subsection{Localization by \texorpdfstring{\((a,b)\)}{(a,b)}-stacks}

Recall that
\[
a(\lambda)=\lambda_1,\qquad b(\lambda)=\ell(\lambda).
\]
The pair
\[
(a(\lambda),b(\lambda))
\]
places \(\lambda\) in a stack of partitions with fixed largest part and fixed number of parts.

For an edge
\[
e=\{\lambda,\mu\},
\]
one can localize it by the endpoint pairs
\[
(a(\lambda),b(\lambda)),\qquad (a(\mu),b(\mu)).
\]

For visualization, it is often useful to assign the edge to a coarse position. One possible convention is the midpoint
\[
\left(\frac{a(\lambda)+a(\mu)}2,\frac{b(\lambda)+b(\mu)}2\right).
\]

Other conventions are also possible, such as assigning the edge to the endpoint with smaller value of a chosen invariant, larger value of a chosen invariant, or smaller axial distance. The convention should be stated explicitly.

Let
\[
R\subseteq \mathbb R^2
\]
be a region in the \((a,b)\)-plane. Let \(E_R(n)\) be the set of edges whose chosen representative point lies in \(R\).

For an invariant \(F\), define the local average absolute jump by
\[
A_F(R;n)=\frac{1}{|E_R(n)|}\sum_{\{\lambda,\mu\}\in E_R(n)}|F(\mu)-F(\lambda)|,
\]
when \(E_R(n)\neq\varnothing\).

For the full basic signature, define the total local jump activity
\[
A_J(R;n)=\frac{1}{|E_R(n)|}\sum_{\{\lambda,\mu\}\in E_R(n)}\bigl(|d(\mu)-d(\lambda)|+|\delta(\mu)-\delta(\lambda)|+|\sigma(\mu)-\sigma(\lambda)|\bigr).
\]

This gives a coarse heatmap of edgewise activity in the \((a,b)\)-projection.

\subsection{Localization relative to the self-conjugate axis}

The self-conjugate axis is visible in the \((a,b)\)-projection through the diagonal
\[
a=b.
\]
Define the signed axial displacement
\[
\alpha(\lambda)=a(\lambda)-b(\lambda)
\]
and the coarse axial distance
\[
\adist(\lambda)=|\alpha(\lambda)|.
\]

For an oriented edge \(e=(\lambda,\mu)\), define
\[
\Delta_e\adist=\adist(\mu)-\adist(\lambda).
\]

Then \(e\) is:
\[
\begin{array}{ll}
\textit{axis-inward} & \text{if } \Delta_e\adist<0,\\[2mm]
\textit{axis-outward} & \text{if } \Delta_e\adist>0,\\[2mm]
\textit{axis-neutral} & \text{if } \Delta_e\adist=0.
\end{array}
\]

This allows one to compare jump signatures with axial drift. For example, one may ask whether large positive degree jumps are more often axis-inward than axis-outward, or whether local-dimension jumps concentrate near small axial distance.

Such statements are computational observations unless proved separately.

\subsection{Localization relative to simplex layers}

For each \(r\), recall the local simplex layer
\[
L_{\ge r}(n)=\{\lambda:\delta(\lambda)\ge r\}.
\]
Its edge boundary is
\[
\partial_E L_{\ge r}(n).
\]

One can compute
\[
B_r(n)=|\partial_E L_{\ge r}(n)|.
\]

More refined oriented data include:
\[
B_r^+(n)=\#\{(\lambda,\mu)\in\mathcal E_n^{\mathrm{or}}:\delta(\lambda)<r\le\delta(\mu)\},
\]
and
\[
B_r^-(n)=\#\{(\lambda,\mu)\in\mathcal E_n^{\mathrm{or}}:\delta(\mu)<r\le\delta(\lambda)\}.
\]

If both orientations are counted, then
\[
B_r^+(n)=B_r^-(n).
\]

Every nonzero local-dimension jump is a crossing of one or more simplex-layer boundaries. More precisely, if
\[
|\Delta_e\delta|=k,
\]
then the edge crosses exactly \(k\) integer threshold boundaries for \(\delta\).

Large local-dimension jumps therefore can be interpreted as simultaneous crossings of several simplex thresholds.

\subsection{Large-jump edges}

For an invariant \(F\), define the unoriented large-jump edge set
\[
E_{|F|}^{\ge r}(n)=\{\{\lambda,\mu\}\in E(G_n):|F(\mu)-F(\lambda)|\ge r\}.
\]

For the full basic signature, there are several useful choices. For an unoriented edge \(e=\{\lambda,\mu\}\), define the \(L^1\)-type total activity by
\[
\|J(e)\|_1=|d(\mu)-d(\lambda)|+|\delta(\mu)-\delta(\lambda)|+|\sigma(\mu)-\sigma(\lambda)|.
\]
Then set
\[
E_{J,1}^{\ge r}(n)=\{e\in E(G_n):\|J(e)\|_1\ge r\}.
\]

Alternatively, one may use the \(L^\infty\)-type activity
\[
\|J(e)\|_\infty=\max\{|d(\mu)-d(\lambda)|,|\delta(\mu)-\delta(\lambda)|,|\sigma(\mu)-\sigma(\lambda)|\},
\]
and define
\[
E_{J,\infty}^{\ge r}(n)=\{e\in E(G_n):\|J(e)\|_\infty\ge r\}.
\]

The two choices answer different questions. The \(L^1\)-quantity measures total jump activity across all components. The \(L^\infty\)-quantity detects whether at least one component is large.

The chosen norm should always be stated explicitly.

\subsection{Suggested atlas figures}

The computational atlas should contain a small number of carefully chosen figures rather than an excessive number of tables.

The following figure types are especially useful. In the figure descriptions below, expressions such as \(|\Delta d|\) denote the corresponding absolute jump on an unoriented edge.

\subsubsection*{Edge heatmap in the \((a,b)\)-plane}

Edges are placed by midpoint or by another stated convention. Color or intensity records one of:
\[
|\Delta d|,
\]
\[
|\Delta\delta|,
\]
\[
|\Delta\sigma|,
\]
or total activity
\[
|\Delta d|+|\Delta\delta|+|\Delta\sigma|.
\]

\subsubsection*{Transition-rank atlas}

Edges are colored by
\[
\trk(e)\in\{0,1,2,3\}.
\]
This shows where pure and mixed transitions occur.

\subsubsection*{Support-jump atlas}

Edges are colored by
\[
\Delta\sigma\in\{-2,-1,0,1,2\}
\]
or by
\[
|\Delta\sigma|\in\{0,1,2\}.
\]
This directly visualizes the support-jump theorem.

\subsubsection*{Simplex-layer boundary atlas}

For a chosen threshold \(r\), draw the edge boundary
\[
\partial_E L_{\ge r}(n).
\]
This shows where local-dimension transitions occur.

\subsubsection*{Large-jump atlas}

Show only edges satisfying a condition such as
\[
|\Delta d|\ge r,
\]
\[
|\Delta\delta|\ge r,
\]
or
\[
\|J(e)\|_\infty\ge r.
\]
This isolates strong transition zones.

\subsection{Computational questions}

The atlas should be guided by a small number of questions.

\begin{problem}[Support sharpness]
For which values of \(n\) do jumps
\[
|\Delta\sigma|=2
\]
first occur? Where are these edges located?
\end{problem}

\begin{problem}[Degree-jump growth]
How does
\[
\max_{e\in E(G_n)}|\Delta_ed|
\]
grow with \(n\)? Is the growth bounded, slowly growing, or governed by special partition shapes?
\end{problem}

\begin{problem}[Local-dimension jumps]
How does
\[
\max_{e\in E(G_n)}|\Delta_e\delta|
\]
grow with \(n\)? Are large local-dimension jumps concentrated near simplex-layer boundaries?
\end{problem}

\begin{problem}[Mixed transitions]
Do mixed transitions dominate for large \(n\), or do pure transitions remain frequent? Which mixed class is most common:
\[
d\delta,\qquad d\sigma,\qquad \delta\sigma,\qquad d\delta\sigma?
\]
\end{problem}

\begin{problem}[Axial localization]
Are high-activity edges concentrated near the self-conjugate axis, near the boundary, or in rear-central regions?
\end{problem}

\begin{problem}[Corridor evidence]
Do degree-increasing, dimension-increasing, or mixed-gradient corridors appear as coherent geometric features in the atlas?
\end{problem}

\subsection{Structural and atlas-level statements}

The following statements are elementary structural facts:
\begin{enumerate}
\item the definitions of jump spectra and jump classes;
\item the symmetry of signed histograms;
\item the support-jump bound;
\item the interpretation of jump magnitude as the number of crossed integer thresholds;
\item the decomposition of edges by transition rank and active component set.
\end{enumerate}

The following statements are atlas-level until separately proved:
\begin{enumerate}
\item large degree jumps concentrate in a particular geometric region;
\item large local-dimension jumps concentrate near simplex-layer interfaces;
\item support jumps correlate with degree jumps;
\item fully mixed transitions mark transition interfaces;
\item jump spectra stabilize in some asymptotic sense;
\item gradient corridors are canonical or persistent across \(n\).
\end{enumerate}

This distinction is essential. The atlas can reveal recurring motifs, but recurrence alone does not make a motif a theorem.

\section{Conclusions and open problems}\label{sec:conclusion}

The purpose of this paper has been to pass from vertex-level morphology to edgewise morphology in the partition graph \(G_n\).

Earlier invariants such as degree, local simplex dimension, and support size describe the local structure of individual vertices. Here we study how these invariants change under a single elementary transfer. The main object is therefore no longer only a vertex invariant
\[
F(\lambda),
\]
but its edgewise jump
\[
\Delta_eF=F(\mu)-F(\lambda)
\]
along an oriented edge
\[
e=(\lambda,\mu).
\]

This shift produces a language for transitions, drift, and interface structure inside \(G_n\).

\subsection{Edgewise morphology}

The central notion introduced here is the jump signature
\[
J(e)=(\Delta_ed,\Delta_e\delta,\Delta_e\sigma),
\]
where
\[
d(\lambda)=\deg_{G_n}(\lambda),
\]
\[
\delta(\lambda)=\dim_{\mathrm{loc}}(\lambda),
\]
and
\[
\sigma(\lambda)=|\supp(\lambda)|.
\]

This signature records, for a single elementary transfer, how graph branching, local simplicial thickness, and support size change simultaneously.

The corresponding absolute signature
\[
|J|(e)=(|\Delta_ed|,|\Delta_e\delta|,|\Delta_e\sigma|)
\]
is an invariant of the underlying unoriented edge.

This distinction between signed and absolute jumps is essential. Signed jumps are appropriate for gradient-type questions, while absolute jumps are appropriate for unoriented atlases of transition magnitude.

\subsection{Proven structural constraints}

The formal part of the paper consists of several elementary but useful structural facts.

First, signed jumps change sign under reversal of orientation, while absolute jumps do not.

Second, support jumps satisfy the universal bound
\[
|\Delta_e\sigma|\le 2.
\]
The support component of every jump signature therefore belongs to the fixed set
\[
\{-2,-1,0,1,2\},
\]
independently of \(n\).

Third, support jumps are determined by local multiplicity data. If an elementary transfer is written as
\[
p\mapsto p-1,\qquad q\mapsto q+1,
\]
then the support jump is determined by the transfer data \((p,q)\) together with the multiplicities of the affected sizes
\[
p,\qquad q,\qquad p-1,\qquad q+1,
\]
understood as a set in which some entries may coincide.

Fourth, for every integer-valued invariant \(F\), nonzero jumps are exactly crossings of threshold layers
\[
L_F^{\ge r}(n)=\{\lambda:F(\lambda)\ge r\}.
\]
Moreover,
\[
|\Delta_eF|
\]
counts the number of integer thresholds crossed by the edge.

Finally, every real-valued invariant \(F\) defines a strict \(F\)-gradient orientation, and this orientation is acyclic.

These facts provide the elementary structural foundation of the paper.

\subsection{Pure, mixed, and neutral transitions}

The jump signature leads to a finite qualitative taxonomy of edge transitions.

An edge may be fully neutral, pure degree-active, pure dimension-active, pure support-active, mixed in two components, or fully mixed in all three components.

Equivalently, the transition rank
\[
\trk(e)=\#\{F\in\{d,\delta,\sigma\}:\Delta_eF\neq0\}
\]
takes values in
\[
\{0,1,2,3\}.
\]

This rank counts how many of the basic morphological layer systems are crossed by the edge.

A fully neutral edge need not be structurally trivial. It may preserve the numerical values of \(d\), \(\delta\), and \(\sigma\), while still changing the neighbor system or other local data. The jump signature is therefore best understood as a controlled numerical shadow of edgewise morphology, not as a complete invariant of local transition structure.

\subsection{Gradient language and corridors}

The paper also introduced a cautious gradient-type language.

There is no single canonical gradient on \(G_n\). Instead, each invariant
\[
F:\Par(n)\to\mathbb R
\]
defines its own gradient orientation, monotone paths, plateau edges, and threshold crossings.

For several invariants
\[
\mathbf F=(F_1,\dots,F_k),
\]
one may impose monotonicity with respect to a prescribed set of allowed jump vectors
\[
C\subseteq\mathbb R^k
\]
by requiring
\[
\Delta_e\mathbf F\in C
\]
along each edge of a path.

This gives a flexible language for degree-increasing, dimension-increasing, support-increasing, mixed-gradient, or opposed-gradient corridors.

The main caution is that different invariants may point in different directions. An edge can be degree-increasing and dimension-decreasing, or support-increasing and degree-neutral. Gradient behavior is therefore always relative to the chosen invariant or vector of invariants.

\subsection{Computational atlas}

The formal language developed in this paper naturally leads to a computational atlas of edgewise transitions.

For fixed \(n\), one may compute
\[
\mathcal J(n)=\{J(e):e\in\mathcal E_n^{\mathrm{or}}\},
\]
\[
\mathcal J_{\mathrm{abs}}(n)=\{|J|(e):e\in E(G_n)\},
\]
as well as histograms of
\[
\Delta d,\qquad \Delta\delta,\qquad \Delta\sigma,
\]
transition-rank distributions, active-component distributions, and joint signature counts.

The atlas can then be localized in several ways:
\begin{enumerate}
\item by \((a,b)\)-stacks, where
\[
a(\lambda)=\lambda_1,\qquad b(\lambda)=\ell(\lambda);
\]
\item by axial distance
\[
|a(\lambda)-b(\lambda)|;
\]
\item by simplex-layer membership;
\item by degree layers;
\item by support strata;
\item by previously studied shell and boundary structures.
\end{enumerate}

This provides a framework for detecting where strong transitions occur and whether they concentrate near central, axial, rear-central, boundary, or shell-interface regions.

\subsection{Open problems}

The paper leaves several natural problems open.

\begin{problem}[Degree-jump growth]
Determine the growth of
\[
\max_{e\in E(G_n)}|\Delta_ed|
\]
as a function of \(n\).

Is this growth bounded, logarithmic, polynomial, or governed by special partition shapes?
\end{problem}

\begin{problem}[Local-dimension jump growth]
Determine the growth of
\[
\max_{e\in E(G_n)}|\Delta_e\delta|.
\]

Are large local-dimension jumps concentrated near boundaries of simplex layers, or do they also occur deep inside thick regions?
\end{problem}

\begin{problem}[Realized jump signatures]
Describe or classify the realized signature set
\[
\mathcal J(n)=\{J(e):e\in\mathcal E_n^{\mathrm{or}}\}.
\]

How does this set change with \(n\)? Does it stabilize in any normalized or asymptotic sense?
\end{problem}

\begin{problem}[Mixed transition dominance]
For large \(n\), which transition ranks dominate?

Do most edges become mixed, or do pure transitions remain significant?

More specifically, what are the asymptotic behaviors of
\[
T_0(n),\qquad T_1(n),\qquad T_2(n),\qquad T_3(n)?
\]
\end{problem}

\begin{problem}[Support jumps and degree jumps]
Support jumps are universally bounded, while degree jumps may be larger.

Is there a systematic relation between large degree jumps and nonzero support jumps?

Can large degree jumps occur frequently along support-neutral edges?
\end{problem}

\begin{problem}[Support jumps and local dimension]
Do support-active edges tend to cross local simplex layers?

Equivalently, is there a strong statistical or structural relation between
\[
\Delta\sigma\neq0
\]
and
\[
\Delta\delta\neq0?
\]
\end{problem}

\begin{problem}[Localization of large jumps]
Where do large jumps occur?

Are they concentrated near the self-conjugate axis, near the spine, near rear-central thickening, near boundary frameworks, near simplex-layer interfaces, or in several distinct regions?
\end{problem}

\begin{problem}[Gradient corridors]
Do there exist long monotone corridors for degree, local dimension, or support?

Can one characterize paths along which
\[
d,\qquad \delta,\qquad \sigma
\]
are simultaneously nondecreasing?

Are there natural opposed corridors, for example degree-increasing but dimension-decreasing paths?
\end{problem}

\begin{problem}[Edgewise reconstruction of morphology]
How much of the global morphology of \(G_n\) can be recovered from jump data alone?

For example, can central regions, shells, or high-thickness zones be detected purely from edgewise transition statistics?
\end{problem}

\begin{problem}[Relation to global topology]
The clique complex
\[
K_n=\Cl(G_n)
\]
has comparatively simple global homotopy type in the established theory, while the local morphology of \(G_n\) is highly structured.

Can jump-type invariants help explain this contrast?

More specifically, can edgewise transition data clarify how locally complex regions are assembled into a globally simple topological object?
\end{problem}

\subsection{Toward a synthesis}

The edgewise language developed here is intended to serve as a bridge between two parts of the partition-graph program.

On one side are vertex-level theories: degree, support, local simplex dimension, shells, layers, boundary regions, and axial morphology.

On the other side is a broader synthetic question:
\begin{quote}
How can the partition graph exhibit rich local and regional morphology while its associated clique complex remains globally topologically simple?
\end{quote}

Jump invariants and gradient-type structures do not answer this question by themselves. They do, however, provide a language for studying how local regimes meet, how transitions occur, and how transition patterns are organized across \(G_n\).

In this sense, edgewise morphology is a natural intermediate layer between static local invariants and global topological structure.

\section*{Acknowledgements}

The author acknowledges the use of ChatGPT (OpenAI) for discussion, structural planning, and editorial assistance during the preparation of this manuscript. All mathematical statements, proofs, computations, and final wording were checked and approved by the author, who takes full responsibility for the contents of the paper.


{\small
\begin{thebibliography}{99}
\setlength{\itemsep}{0pt}

\bibitem{Andrews}
G. E. Andrews,
\textit{The Theory of Partitions},
Cambridge Mathematical Library,
Cambridge University Press, Cambridge, 1998.

\bibitem{Diestel}
R. Diestel,
\textit{Graph Theory},
5th ed., Graduate Texts in Mathematics, vol. 173,
Springer, Berlin, 2017.

\bibitem{Kozlov}
D. Kozlov,
\textit{Combinatorial Algebraic Topology},
Algorithms and Computation in Mathematics, vol. 21,
Springer, Berlin, 2008.

\bibitem{Macdonald}
I. G. Macdonald,
\textit{Symmetric Functions and Hall Polynomials},
2nd ed., Oxford Mathematical Monographs,
Oxford University Press, Oxford, 1995.

\bibitem{Stanley}
R. P. Stanley,
\textit{Enumerative Combinatorics. Vol. 1},
2nd ed., Cambridge Studies in Advanced Mathematics, vol. 49,
Cambridge University Press, Cambridge, 2012.

\bibitem{Lyu2026Homotopy}
F. B. Lyudogovskiy,
\textit{The homotopy type of the clique complex of the partition graph},
arXiv:2603.14370v1 [math.CO], 2026.

\bibitem{Lyu2026Local}
F. B. Lyudogovskiy,
\textit{Local Morphology of the Partition Graph},
arXiv:2603.18696v1 [math.CO], 2026.

\bibitem{Lyu2026Growing}
F. B. Lyudogovskiy,
\textit{The Partition Graph as a Growing Discrete Geometric Object},
arXiv:2603.21221v1 [math.CO], 2026.

\bibitem{Lyu2026Directional}
F. B. Lyudogovskiy,
\textit{Directional Geometry and Anisotropy in the Partition Graph},
arXiv:2603.25488v1 [math.CO], 2026.

\bibitem{Lyu2026DegreeTheory}
F. B. Lyudogovskiy,
\textit{Degree theory of the partition graph: exact maxima, profiles, and fibres},
arXiv:2603.27248v1 [math.CO], 2026.

\bibitem{Lyu2026SimplicialShells}
F. B. Lyudogovskiy,
\textit{Simplicial shells and thickness in the partition graph},
arXiv:2603.28171v1 [math.CO], 2026.

\bibitem{Lyu2026SimplexLayers}
F. B. Lyudogovskiy,
\textit{Simplex Layers and Phase Boundaries in the Partition Graph},
arXiv:2603.29988v1 [math.CO], 2026.

\bibitem{Lyu2026Support}
F. B. Lyudogovskiy,
\textit{Support and Support Jumps in the Partition Graph},
arXiv:2604.11837v1 [math.CO], 2026.

\end{thebibliography}
}
\end{document}